\def\spacingset#1{\renewcommand{\baselinestretch}%
{#1}\small\normalsize} \spacingset{1}
\newtheorem{mythm}{Theorem}
\newtheorem{mycor}{Corollary}
\newtheorem{mydef}{Definition}
\newtheorem{mylem}{Lemma}
\newtheorem{myrem}{Remark}
\newcommand{\bern}{\text{Bernoulli}}
\newcommand{\C}{\mathcal{C}}
\newcommand{\J}{\mathcal{J}}
\newcommand{\I}{\mathcal{I}}
\newcommand{\K}{\mathcal{K}}
\newcommand{\R}{\mathbf{R}}
\newcommand{\bigO}{\mathcal{O}}
\newcommand{\eye}{I}
\newcommand{\G}{\mathcal{G}}
\newcommand{\ie}{{\it{i.e.}}}
\newcommand{\eg}{{\it{e.g.}}}
\newcommand{\nset}{\left[n\right]}
\newcommand{\kset}{\left[k\right]}
\renewcommand{\O}{\mathbf{O}}
\renewcommand{\Pr}{\text{Pr}}
\DeclareMathOperator{\diag}{diag}
\DeclareMathOperator{\tr}{Tr}
\DeclareMathOperator{\range}{range}
\DeclareMathOperator*{\argmax}{arg\,max}
\newcommand{\ocs}{\text{OCS}}
\newcommand{\lmat}{\left[}
\newcommand{\rmat}{\right]}
\title{Robust and efficient multi-way spectral clustering}
\author{Anil Damle\thanks{Department of Mathematics, University of California, Berkeley (damle@berkeley.edu)}, Victor Minden\thanks{Institute for Computational \& Mathematical Engineering, Stanford University (vminden@stanford.edu)}, and Lexing Ying\thanks{Department of Mathematics and Institute for Computational \& Mathematical Engineering, Stanford University (lexing@stanford.edu)}}
\begin{document}



\maketitle

\begin{abstract}
We present a new algorithm for spectral clustering based on a column-pivoted QR factorization that may be directly used for cluster assignment or to provide an initial guess for \texttt{k-means}. Our algorithm is simple to implement, direct, and requires no initial guess. Furthermore, it scales linearly in the number of nodes of the graph and a randomized variant provides significant computational gains. Provided the subspace spanned by the eigenvectors used for clustering contains a basis that resembles the set of indicator vectors on the clusters, we prove that both our deterministic and randomized algorithms recover a basis close to the indicators in Frobenius norm. We also experimentally demonstrate that the performance of our algorithm tracks recent information theoretic bounds for exact recovery in the stochastic block model. Finally, we explore the performance of our algorithm when applied to a real world graph.
\end{abstract}

Spectral clustering has found extensive use as a mechanism for detecting well-connected subgraphs of a network. Typically, this procedure involves computing an appropriate number of eigenvectors of the (normalized) Laplacian and subsequently applying a clustering algorithm to the embedding of the nodes defined by the eigenvectors. Currently, one of the most popular algorithms is \texttt{k-means++} \cite{kmeanspp}, the standard iterative \texttt{k-means} algorithm \cite{lloyd} applied to an initial clustering chosen via a specified random sampling procedure. Due to the non-convex nature of the \texttt{k-means} objective, however, this initialization does not preclude convergence to local minima, which can be poor clusterings. 

We provide an alternative, direct (non-iterative) procedure for clustering the nodes in their eigenvector embedding. It is important to note that our procedure is not a substitute for \texttt{k-means++} when tackling general (\emph{i.e.}, non-spectral) clustering problems. For spectral embeddings of graphs with community structure, however, we take advantage of additional geometric structure of the embedding to build a more robust clustering procedure. Furthermore, our algorithm is built out of a simple column-pivoted QR factorization, making it easy to implement and use. Finally, a simple randomized acceleration of our algorithm substantially reduces the cost of cluster assignment, making it feasible for large problems of practical interest.

\section{Background and setup}

Given a simple undirected graph $\G$ with adjacency matrix ${A\in\{0,1\}^{n\times n}}$, we consider the multi-way clustering problem of partitioning the vertices of $\G$ into $k$ disjoint clusters.  A common (albeit unrealistic) generative model for graphs exhibiting this sort of cluster structure is is the \emph{$k$-way stochastic block model}.

\begin{mydef}[Stochastic block model \cite{sbm}]
Partition $[n]$ into $k$ mutually disjoint and non-empty clusters $\C_1,\dots,\C_k$.  Given probabilities $p$ and $q$ such that $p>q$, let $M \in [0,1]^{n\times n}$ have entries $M_{ii}\equiv0$ and, for $i\ne j$,
\begin{align*}
M_{ij} &\equiv \left\{\begin{array}{ll}
p,& \{i,j\}\subset\C_k \text { for some } k,\\
q,& \text{else.}\end{array}\right.
\end{align*}

A symmetric adjacency matrix $A\in\{0,1\}^{n\times n}$ with ${A_{ij} \sim \bern(M_{ij})}$ for $i< j$ and $A_{ii}\equiv0$ for all $i$ is said to be distributed according to the $k$-way stochastic block model (SBM) with clusters $\{\C_i\}_{i=1}^k$, within-cluster probability $p$, and between-cluster probability $q$.
\end{mydef}

For an SBM with equisized clusters, the maximum-likelihood estimate for the clusters can be phrased in terms of maximizing the number of within-cluster edges; that is, given $A$, find a matrix $X$ whose columns are indicator vectors for cluster membership such that $X$ attains the optimal value of the combinatorial optimization problem
\begin{equation}\label{eq:combo}
\begin{aligned}
& \underset{X}{\text{maximize}}
& & \tr\left(X^TAX\right) \\
& \text{subject to}
& & X \in \{0,1\}^{n\times k},\quad X^TX = \frac{n}{k}\eye_{k}.
\end{aligned}
\end{equation}
If $A$ is not assumed to be a random sample from the SBM, then the above problem does not have the interpretation of maximum-likelihood estimation, though it remains a common starting point for clustering. Given that the combinatorial optimization problem is NP-hard, it is typical to relax \eqref{eq:combo} to a computationally-tractable convex formulation.  

A common relaxation of \eqref{eq:combo} is to remove the restriction that $X\in\{0,1\}^{n\times k}$ and instead optimize over real-valued matrices,
\begin{equation}\label{eq:spectral}
\begin{aligned}
& \underset{X}{\text{maximize}}
& & \tr\left(X^TAX\right) \\
& \text{subject to}
& & X \in \R^{n\times k}, \quad X^TX = \frac{n}{k}\eye_{k}.
\end{aligned}
\end{equation}
While this optimization problem is still non-convex, it follows from the Courant-Fischer-Weyl min-max principle that an optimal point $X_*$ is given as $X_* = V_kQ$, where $V_k\in\R^{n\times k}$ contains the eigenvectors of $A$ corresponding to the $k$ largest eigenvalues and $Q\in\O_k$ is an arbitrary orthogonal transformation.  Because the solution $X_*$ is no longer discrete, the canonical spectral clustering approach uses the rows of $X_*$ as coordinates in a standard point-cloud clustering procedure such as \texttt{k-means}.

We propose an algorithm based on a column-pivoted QR factorization of the matrix $V_k^T$ that can be used either as a stand-alone clustering algorithm or to initialize iterative algorithms such as \texttt{k-means}.  Our approach stems from the computational quantum chemistry literature, where the same basic procedure has been developed as a method for constructing spatially-localized bases of an eigenspace associated with the nonlinear eigenproblem of Kohn-Sham density functional theory \cite{damle2015,damle2016}.  Our numerical experiments show that our approach closely tracks the information-theoretic limit in terms of exact clustering of the stochastic block model, whereas \texttt{k-means++} on its own does not.

\subsection{Related work}
An algorithm using a column-pivoted QR for cluster assignment of general point-cloud data\textemdash with an assumption of orthogonality amongst the clusters\textemdash has previously appeared in the literature in the context of spectral relaxation of the \texttt{k-means} objective \cite{zha2001spectral}.  Curiously, though we find the basic idea to be powerful, this algorithmic approach seems to have been ignored and we can find no further reference to it.  We expand upon this work, taking advantage of algorithmic improvements appearing in the computational chemistry literature for greater efficiency.  Further, by addressing the sparse adjacency matrix case instead of the Gram matrix of point-cloud data, we are able to strongly motivate our approach based on proven spectral properties of model graphs and graphs with assumed underlying cluster structure. 

Spectral methods as we discuss here stem from work on the Fiedler vector \cite{fiedler1973algebraic,donath1973lower} for the two-block case and spectral embeddings coupled with \texttt{k-means} clustering \cite{Ng01onspectral} in the multi-block case. For a more comprehensive overview of initialization procedures for \texttt{k-means} see, \emph{e.g.}~, Celebi et al.~\cite{celebi2013comparative}. Recent analysis of these methods applied to the SBM \cite{rohe,gao2015achieving} focuses on misclassification of nodes. Another line of work considers matrices besides the adjacency matrix or normalized Laplacian to achieve theoretical detection thresholds \cite{krzakala2013spectral,massoulie2014community}.  Other recent work \cite{Montanari_Sen} demonstrates where spectral methods break down and argues for the use of SDP-based methods. 

Recently, there has been significant work on understanding when it is possible to exactly recover communities in the sparse SBM, wherein the probability of connections between nodes is $\Theta(\log{n}/n)$. Specifically, the information theoretic bound for when exact recovery is possible with two blocks was developed by Abbe et al. \cite{abbe} and an SDP-based algorithm achieving the bound was proposed. Additional recent work \cite{abbe2015community,hajek2015achieving,hajek2016achieving} has extended this theory to the multi-block case (for a constant or slowly growing number of clusters) and generalized the SDP approach. We will return to this example in the numerical results section.

\section{Algorithm}

\subsection{Preliminaries}

Suppose $\G$ is a simple undirected graph with symmetric adjacency matrix $A\in\{0,1\}^{n\times n}$. Let the eigendecomposition of $A$ be given by
\begin{align*}
A &= \lmat\begin{array}{c|c}V_k & V_{n-k}  \end{array} \rmat\lmat\begin{array}{c|c} \Lambda_k & 0\\\hline 0 & \Lambda_{n-k}\end{array}\rmat \lmat\begin{array}{c|c}V_k & V_{n-k}  \end{array} \rmat^T,
\end{align*}
where $V_k\in\R^{n\times k}$ and $V_{n-k}\in\R^{n\times(n-k)}$ contain pairwise orthonormal columns of eigenvectors, and $\Lambda_k = \diag(\lambda_1,\lambda_2,\dots,\lambda_k)$ contains the $k$ largest eigenvalues of $A$ sorted as $\lambda_1 \ge \lambda_2,\ge \dots\ge\lambda_k$.  Of interest in spectral clustering is the structure of the eigenspace spanned by the columns of $V_k$ or a related matrix, as we make explicit below.
\begin{mydef}
Suppose $\C_1,\C_2,\dots,\C_k$ is a clustering of the vertex set $[n]$.  We say the matrix $W'\in \{0,1\}^{n\times k}$ with entries given by
\begin{align*}
W'_{ij} &\equiv \left\{\begin{array}{cl}1, & \text{if vertex $i$ belongs to cluster $j$,}\\ 0, &\text{otherwise,} \end{array} \right.
\end{align*}
is an \emph{indicator matrix} for the underlying clustering.
If $W'$ is an indicator matrix and $W\in \R_+^{n\times k}$ is given by scaling the columns of $W'$ to have unit $\ell_2$ norm then we call $W$ a \emph{normalized indicator matrix} for the underlying clustering.
\end{mydef}

Suppose $A$ is sampled from the $k$-way SBM on $n$ nodes with within- and between-cluster connection probabilities $p$ and $q$. Letting $W$ be a normalized indicator matrix for the underlying clustering and provided that $p$ and $q$ do not decay too quickly asymptotically, standard matrix concentration inequalities \cite{tropp,sharp_bounds} coupled with the Davis-Kahan theorem \cite{daviskahan} give convergence of the spectral projector $V_kV_k^T$ in $n$, \ie, for fixed $k$ there exists $\delta_n=o(1)$ such that
\begin{align*}
\left\|WW^T - V_kV_k^T\right\|_2 \leq \delta_n
\end{align*}
with high probability.
The exact nature of these concentration results depends strongly on the asymptotic behavior of $p$ and $q$, however, the general idea is that with high probability any given $A$ will be close to the average adjacency matrix. More generally, if $A$ is a symmetric adjacency matrix not necessarily coming from the SBM that is sufficiently close to $M$, the same basic argument applies \cite{luxburg}. Therefore, let $\epsilon > 0$ be such that
\begin{align*}
\left\|WW^T - V_kV_k^T\right\|_2 \le \epsilon.
\end{align*}
From this bound on the distance between spectral projectors we can conclude the existence of $Q\in\O_k$ such that
\begin{align}\label{eq:rotnorm}
\left\|W - V_kQ\right\|_F \le \sqrt{k}\epsilon + \bigO(\epsilon^2).
\end{align}

Concisely, we expect the eigenvector matrix $V_k$ to look like the normalized indicator matrix $W$, under a suitable orthogonal transformation.  Alternatively, $V_kQ$ provides an approximately sparse basis for the range of $V_k$.  Thus, while any orthogonal transformation of the (appropriately scaled) eigenvectors is a solution to \eqref{eq:spectral}, there exists a specific solution that directly reflects the underlying cluster structure. If $W$ and $V_k$ in \eqref{eq:rotnorm} were both known, finding an orthogonal transformation to make $V_k$ resemble $W$ is not a difficult problem.  However, we seek to find this transformation without knowing the clustering $\emph{\emph{a priori}}.$

In general, when handed a graph that may not come from the SBM, we cannot leverage explicit knowledge of eigenvectors. However, for graphs that admit a $k$-way clustering, there is a slightly more general geometric structure we can appeal to\textemdash so-called orthogonal cone structure (OCS). Motivation for this model and the nomenclature is borrowed from Schiebinger et al.~\cite{schiebinger2015geometry}. However, they address a slightly different (though related) problem and we do not use their model exactly. Additional justification for finding OCS in graphs with community structure may be found in Gharan \& Trevisan \cite{gharan2014partitioning} and Benzi et al.~ \cite{benzi2013decay}. 

To aid in our discussion, for a given orthonormal vector $q \in \R^k$ and scalar $\mu\in[0,1]$ we define the cone $\K_{\mu}(q)$ as
\[
\K_{\mu}(q) \equiv \left\{x \in \R^k \,\left\vert\, \frac{x^Tq}{\|x\|_2} \geq 1-\mu \right. \right\}.
\] 
Given this simple definition of a cone, we may now define OCS rigorously.
\begin{mydef}
A set of points $\left\{x_i\right\}_{i=1}^n$ in $\R^k$ exhibit \emph{orthogonal cone structure} (OCS) with parameters $\eta\in(0,1],$ $\mu\in\left[0,1\right],$ and $\delta > 0$ if the following hold:
\begin{enumerate}
\item There exists a set of orthonormal vectors $\left\{q_i\right\}_{i=1}^k$ such that at least $\eta n$ of the $x_i$ satisfy  
\begin{equation}
\label{eq:in_cone}
x_i \in \K_{\mu}\left(q_j\right)
\end{equation}
for exactly one $j = 1,\ldots,k.$ Furthermore, each of the $\K_{\mu}\left(q_j\right)$ contains at least one $x_i.$
\item Let $\I$ denote the set of $x_i$ that satisfy~\ref{eq:in_cone} for some $j,$ then 
\[
\|x_i\|_2 \leq \delta \min_{l\in\kset}\max_{x_j\in\K_{\mu}(q_l)}\|x_j\|_2 \qquad \forall \; x_i\notin\I.
\]
\end{enumerate}
When these two conditions are satisfied for a set of points in $\R^k$ we say $\left\{x_i\right\}_{i=1}^n \in \ocs_k(\eta,\mu,\delta).$ 
\end{mydef}
This definition essentially says that given $n$ points, a fraction $\eta$ of them can be assigned uniquely to a set of cones with orthogonal centers, while the remaining points have small norm relative to the largest point in each cone. Importantly, given the cone centers $\left\{q_j\right\}_{j=1}^k$ it is easy to correctly cluster the points that lie in the cones\textemdash under the assumption that the cones reflect the desired cluster structure. This is accomplished by simply checking which $q_j$ a given point has largest magnitude inner product with. Those points that lie outside the cones are assumed to not be particularly well-suited to any of the clusters and any assignment is considered acceptable.

In both OCS and the SBM we have $n$ points contained in $k$ cones, the centers of which are assumed to be orthogonal. However, they are represented in an arbitrary coordinate system using the eigenvectors, which means the embedding coordinates of each point do not make it easy to determine which points belong to which cone. If we can find a coordinate system roughly aligned with the centers of the cones, we can then rotate the points into this system and read off the cluster assignment based off the largest magnitude entry. This idea is illustrated in Figure~\ref{fig:rotate_cones}.

\begin{figure}[ht!]
\centering
\includegraphics[width=.7\linewidth]{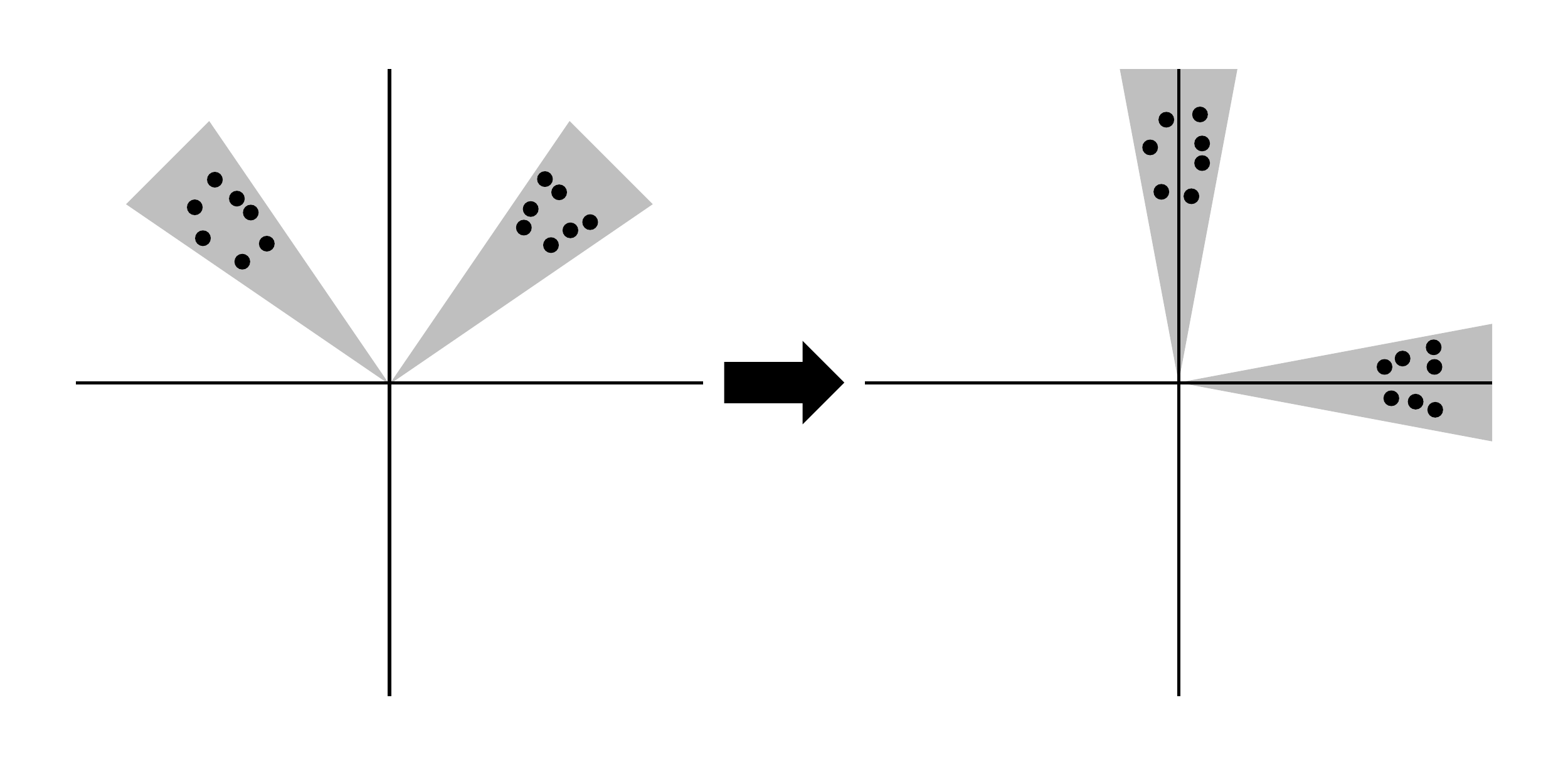}
\caption{Two dimensional demonstration of the expected orthogonal cone structure. While the computed eigenvectors (left) may not readily allow for coordinate-based clustering, there is a rotation of the coordinate system (right) that does.}
\label{fig:rotate_cones}
\end{figure}

We accomplish this task primarily with the use of a column-pivoted QR factorization. We define this factorization notationally for matrices that are wider than they are tall, which is the setting we will require later.
\begin{mydef}
Let $B\in\R^{k\times n}$ with $k\le n$ have rank $k$.
The \emph{column-pivoted QR factorization} (CPQR) of $B$ is the factorization
\[
B\Pi = Q\begin{bmatrix} R_1 & R_2 \end{bmatrix}
\]
as computed via the Golub-Businger algorithm \cite{businger1965linear} (see also Chapter 5 of Golub \& Van Loan \cite{golub}) where $\Pi \in \R^{n\times n}$ is a permutation matrix, $Q \in \O_k$ is an orthogonal matrix, $R_1 \in \R^{k\times k}$ is upper triangular, and $R_2 \in \R^{k\times (n-k)}$.
\end{mydef}

Ideally, the permutation $\Pi$ is chosen to make $R_1$ as well-conditioned as possible given $B$.  The Golub-Businger algorithm chooses this permutation using a greedy heuristic wherein at each step of the algorithm the column with largest norm is picked as the pivot and the remaining columns are orthogonalized against it.

Lastly, we briefly define a slightly less common matrix factorization we leverage in our algorithm\textemdash the polar factorization
\begin{mydef}
For a matrix $B\in\R^{k\times k},$ the \emph{polar factorization} of $B$ is the factorization
\[
B = UH
\]
where $U\in\R^{k\times k}$ is orthogonal and $H\in\R^{k\times k}$ is positive semi-definite.
\end{mydef}
Such a factorization always exists and, if $B$ has full rank, $H$ is guaranteed to be positive definite. Importantly, in any orthogonally-invariant norm $U$ is the closest orthogonal matrix to $B$ \cite{FanHoffman}. Finally, the polar factorization may be computed easily from the orthogonal factors from the singular value decomposition (SVD) of $B$ with computational cost $\bigO(k^3)$ or by a specialized routine. Further details may be found in, \eg, Higham \cite{Higham_polar}.

\subsection{Algorithm statement}
\label{sec:alg_sub}
Given $V_k$ we propose the following algorithm for finding $k$ clusters in the graph:
\begin{enumerate}
\item Compute the CPQR factorization $$V_k^T\Pi = QR.$$
\item Let $\C$ denote the $k$ columns selected as the first $k$ columns of $\Pi.$
\item Compute the polar factorization $\left(V_k^T\right)_{:,\C} = UH.$
\item For each $j\in \left[n \right]$ assign node $j$ to cluster $$c_j \equiv \argmax_i  \left(\left\lvert U^TV_k^T \right\rvert_{i,j} \right). $$
\end{enumerate}
Note that $\lvert \cdot \rvert$ is used to represent the elementwise absolute value operation when applied to a matrix and here we assume that every node belongs to exactly one cluster.  For real-world applications, if all entries in a column are small we may optionally not assign the corresponding node to any cluster, but we do not explore that further at this time.

The above algorithm is strikingly simple, there is no dependence on any sort of initial guess, it is naturally invariant to the data ordering, and it is constructed out of common and efficient matrix factorizations. From an implementation standpoint the CPQR may be computed using, \emph{e.g.}, the \texttt{qr} function in MATLAB\textsuperscript{\textregistered} or the LAPACK \cite{laug} routine \texttt{DGEQP3} \cite{blas3QRCP}.  Overall, the cost of this algorithm is linear in the number of nodes of the graph and quadratic in the number of clusters, \emph{i.e.}, $\bigO(nk^2)$. The lack of an initial guess is a particularly important distinction from the generally used \texttt{k-means} algorithm.

A simple randomized variant of our algorithm allows us to more efficiently compute a matrix $Q$ such that $Q^TV_k$ may be used for cluster assignment with high probability. To do this, we define the probability distribution $\rho$ over $\left[ n \right]$ as
\[
\Pr(\{j\}) = \frac{\left\| \left(V_k^T\right)_{:,j}\right\|_2}{k}.
\]
Sampling strategies based on $\rho$ are not new and corresponds to sampling via the so-called leverage scores \cite{mahoney2009cur} of the spectral projector $V_kV_k^T$. Generically, similar strategies are common in the construction of low rank factorizations \cite{frieze2004fast}. However, here we have a different goal. The spectral projector is always exactly rank $k$, and we are not concerned with sufficiently sampling its range. Rather, we simply need to ensure sampling of one column associated with each cluster for the QR factorization to be effective; we are essentially throwing away excess data. The use of leverage scores ensures that the probabilities of selecting a column from a given cluster are all (roughly) equal.

Given $V_k$, an oversampling factor $\gamma$, and a failure probability $\delta$, the following algorithm computes the cluster assignments:
\begin{enumerate}
\item Sample a set $\J$ of $\gamma k \log\left( \frac{k}{\delta}\right) $ nodes from $\left[ n \right]$ based on the discrete distribution $\rho$.
\item Compute the CPQR factorization $$\left(V_k^T\right)_{:,\J}\Pi = QR.$$
\item Let $\C$ denote the $k$ columns selected as the first $k$ columns of $\Pi.$
\item Compute the polar factorization $\left(\left(V_k^T\right)_{:,\J}\right)_{:,\C} = UH.$
\item For each $j\in \left[n \right]$ assign node $j$ to cluster $$c_j \equiv \argmax_i  \left(\left\lvert U^TV_k^T \right\rvert_{i,j} \right).$$
\end{enumerate}

The cost of the CPQR is now $\bigO(k^3\log k),$ yielding a cluster assignment operator in time independent of $n$. While computing $U^TV_k$ in step 3 formally scales as $\bigO(nk^2)$, the dominant cost in the deterministic algorithm is the CPQR. If one only wishes to cluster a subset of the nodes or check if two nodes are in the same cluster, then $U^T$ need only be applied to some columns of $V_k^T$. Furthermore, $U^TV_k^T$ scales well when computed on parallel architectures and the assumed structure, specifically the locality, of the product could also potentially be used to reduce the cost of its computation. The other dependence on $n$ in the form of $\bigO(nk)$ is in computing the column norms of $V_k^T$ to build $\rho$. This operation also parallelizes well and asymptotically costs no more than writing down $V_k$ itself.  To truly make the cost of constructing $U$ independent of $n$, however, one may sample nodes uniformly to build $\J$. In this case one must sample slightly more than $k\log k$, in a manner that depends on the size of the smallest cluster relative to $n$, to ensure the same failure probability.

Prior to embarking on the proofs, we provide a brief high-level rationale for the use of a CPQR. Assuming every point belongs to exactly one cluster, the \texttt{k-means} algorithm applied to the spectral embedding may be expected to work in principle given a sufficiently close initial guess for the cluster centers. Equivalently, given one node from each cluster we may use it as a proxy for a cone center. It turns out the CPQR procedure achieves this goal when we are working with the spectral embedding for a graph with community structure.

Heuristically, consider picking an initial node at random.  It belongs to one of the clusters and thus might be a reasonable surrogate for a cone center that represents that cluster. Now, we need a procedure to pick a point that is guaranteed to be in a different cluster to use as another center. For general clustering problems this may be difficult. However, here we observed that the clusters have additional structure: we expect them to be orthogonal. This motivates the use of the CPQR. Once we orthogonalize all of the points against the center point of the first cluster, the norms of other points in the same cluster become small while the norms of points in different clusters remain more or less the same. Therefore, picking the point with largest remaining norm should yield a viable center for a different cluster. By repeating this procedure $k$ times we hope to get a good set of $k$ different cluster centers. This procedure is codified by the CPQR factorization introduced earlier.

The preceding procedure ideally gives us $k$ points, each of which represents one of the underlying cones. However, we expect the cones to exhibit orthogonal structure and the selected cluster centers may not be quite orthogonal. Hence, we compute the ``closest'' orthogonal matrix to $\left(V_k^T\right)_{:,\C}$ via the polar factorization. This orthogonal matrix defines a coordinate system that aligns with the expected structure of the points. Once viewed in this coordinate system (by applying the appropriate rotation) we may simply cluster the points based on their largest magnitude entry.

\begin{myrem}
One may, alternatively, simply use the $Q$ factor from the CPQR as the new coordinate system, which corresponds to a greedy orthogonalization of the $k$ points. In the later examples, because $R_1$ ends up being quite well conditioned there is not a significant difference between these two strategies. The polar factorization has the potential advantage of not giving preferential treatment to any of coordinate directions based on the order in which they were selected.
\end{myrem}

\subsection{Analysis}
\label{subsec:analysis}
To make our analysis concrete, we assume the indicator structure inherent to the SBM. The justification for OCS is the same. In particular, one may relax the model for $W$ to represent more general OCS. As long as $W$ is assumed to only contain one nonzero per row the following analysis still holds (with slightly larger constants to account for the fact that each column of $W$ may contain more than one nonzero value). This is reasonable because the problem is fundamentally the same for both the SBM and more general OCS: given a collection of orthogonal indicator vectors that have been arbitrarily rotated and perturbed, how do we recover the underlying structure? 

Theorem 7.2 of Gu \& Eisenstat \cite{gu1996efficient} proves that for a CPQR as defined above
\begin{align}\label{eq:fnk}
\sigma_{k}\left(R_{1}\right) \geq \frac{\sigma_k(B)}{f(n,k)},
\end{align}
where $\sigma_j$ denotes the $j^\text{th}$ singular value of a matrix and $f(n,k) = 2^k\sqrt{n}$. This bound ensures that the singular values of $R_1$ are not too much smaller than the singular values of $B$.  The CPQR defined here is related to a broader class of rank-revealing QR (RRQR) factorizations \cite{chandrasekaran1994rank,gu1996efficient} that seek a permutation $\Pi$ such that \eqref{eq:fnk} holds for different forms of the denominator $f(n,k)$, \emph{e.g.}, $\sqrt{k(n-k)+\min(n,n-k)}$. Because the matrices we deal with are quite far from the pathological worst-case examples we find the standard CPQR algorithm sufficient for our needs.

We will show our algorithm finds a matrix $U$ such that $V_kU$ is close to $W$ (up to a signed column permutation) without explicit knowledge of the sparsity structure of $W.$ The potentially different column order inherited from the signed permutation simply corresponds to a relabeling of the clusters and since we take use the largest entry in absolute value to do cluster assignment the sign does not matter.

We prove two lemmas that ensure the CPQR pivoting strategy applied to $V_k^T$ (or a randomly selected subset of its columns) implicitly identifies $k$ linearly independent columns of $WW^T$.
\begin{mylem}
\label{lem:piv}
Let $W \in \R^{n\times k}$ and $V \in \R^{n\times k}$ have orthonormal columns and satisfy
\[
\left\|WW^T - VV^T\right\|_2 \leq \epsilon
\]
with $\epsilon < 2^{-k}/\sqrt{n}$. If $V^T\Pi = QR$ is a CPQR factorization and $\C$ denotes the original indices of the first $k$ columns selected by the permutation $\Pi$, then
\[
\range\left\{\left(WW^T\right)_{:,\C} \right\} = \range\left\{W\right\}.
\]
\end{mylem}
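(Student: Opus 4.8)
The plan is to reduce the range equality to a rank (equivalently, a smallest-singular-value) statement about the $k$ selected columns of $WW^T$, and then obtain that rank bound by feeding the Gu--Eisenstat pivoting guarantee \eqref{eq:fnk} into a one-line perturbation argument. Since $W$ has orthonormal columns, $WW^T$ is the orthogonal projector onto $\range\{W\}$, so every column of $(WW^T)_{:,\C}$ lies in $\range\{W\}$ and the inclusion $\range\{(WW^T)_{:,\C}\}\subseteq\range\{W\}$ is automatic. As $\range\{W\}$ has dimension $k$, it therefore suffices to show that the $k$ columns $(WW^T)_{:,\C}$ are linearly independent, i.e. that $\sigma_k\big((WW^T)_{:,\C}\big)>0$; this is the quantity I would lower bound.

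First I would control the analogous quantity on the $V$ side. Because $V$ has orthonormal columns, $B:=V^T$ has rank $k$ with $\sigma_k(B)=1$, so applying \eqref{eq:fnk} to the CPQR $V^T\Pi = QR = Q\begin{bmatrix}R_1 & R_2\end{bmatrix}$ gives $\sigma_k(R_1)\ge 1/f(n,k) = 2^{-k}/\sqrt{n}$. Since $(V^T)_{:,\C} = Q R_1$ with $Q$ orthogonal, we get $\sigma_k\big((V^T)_{:,\C}\big) = \sigma_k(R_1)\ge 2^{-k}/\sqrt{n}$; and because left-multiplication by the orthonormal-columned matrix $V$ preserves singular values, $(VV^T)_{:,\C} = V\,(V^T)_{:,\C}$ satisfies $\sigma_k\big((VV^T)_{:,\C}\big)\ge 2^{-k}/\sqrt{n}$ as well.

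Next I would transfer this to $WW^T$ by perturbation. Restricting to the columns indexed by $\C$ can only decrease the spectral norm, so $\big\|(WW^T)_{:,\C} - (VV^T)_{:,\C}\big\|_2 \le \big\|WW^T - VV^T\big\|_2 \le \epsilon$. Weyl's inequality for singular values then yields
\[
\sigma_k\big((WW^T)_{:,\C}\big) \ \ge\ \sigma_k\big((VV^T)_{:,\C}\big) - \epsilon \ \ge\ \frac{2^{-k}}{\sqrt{n}} - \epsilon \ >\ 0,
\]
where the final strict inequality is exactly the hypothesis $\epsilon < 2^{-k}/\sqrt{n}$. Hence $(WW^T)_{:,\C}$ has rank $k$, its range is a $k$-dimensional subspace of the $k$-dimensional space $\range\{W\}$, and the two coincide.

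There is no serious obstacle here; the argument is short. The one point requiring care is to carry everything in terms of the projectors $WW^T$ and $VV^T$ rather than $W$ and $V$ themselves, since the hypothesis only controls $\|WW^T - VV^T\|_2$ --- converting that to a bound on $\|W - VQ\|$ via a Davis--Kahan-type step (as in \eqref{eq:rotnorm}) would introduce an extra $\sqrt{k}$ and forfeit the sharp threshold $2^{-k}/\sqrt{n}$. The identities $\sigma_k\big((VV^T)_{:,\C}\big) = \sigma_k\big((V^T)_{:,\C}\big) = \sigma_k(R_1)$, which let the pivoting guarantee plug directly into the perturbation step, rely on $V$ having orthonormal columns and should be stated explicitly.
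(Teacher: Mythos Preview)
Your proposal is correct and follows essentially the same approach as the paper: both apply the Gu--Eisenstat bound to get $\sigma_k(R_1)\ge 2^{-k}/\sqrt{n}$, identify $(VV^T)_{:,\C}=VQR_1$, and then use the $\epsilon$-perturbation to conclude that $(WW^T)_{:,\C}$ (equivalently $(W^T)_{:,\C}$) has full rank. The only cosmetic difference is that the paper phrases the final step as a distance-to-nearest-singular-matrix contradiction, whereas you invoke Weyl's inequality directly; these are the same argument.
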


\begin{proof}
Theorem 7.2 of Gu \& Eisenstat \cite{gu1996efficient} coupled with the fact that $\sigma_i(V)= 1$ for $i=1,\dots,k$ implies that $\sigma_{\min}(R_1) \geq 2^{-k} /\sqrt{n}$. Substituting in the CPQR factorization yields
\[
\left(WW^T + E\right)\Pi = VQ\begin{bmatrix}R_1 & R_2 \end{bmatrix}
\]
with $\|E\|_2 \leq \epsilon$.  Now, $\sigma_{\min}(R_1) \geq \epsilon$ implies that $\left(W^T\right)_{:,\C}$ is non-singular, as otherwise the distance from $VQR_1$ to the nearest singular matrix is less than $\epsilon$, which is a contradiction.  The result follows.
\qquad
\end{proof}

\begin{mylem}
\label{lem:piv_rand}
Let $W \in \R^{n\times k}$ be a normalized indicator matrix and $V \in \R^{n\times k}$ have orthonormal columns and satisfy
\[
\left\|WW^T - VV^T\right\|_2 \leq \epsilon
\]
with $\epsilon < 2^{-k}/\sqrt{n}$ and assume there exists $\gamma > 0$ such that
\[
\frac{1}{\gamma k} \leq \sum_{i\in\C_j}\left\|V_{:,i}\right\|^2_2
\]
for $j\in\kset$. Let $\J$ denote $\gamma k \log \frac{k}{\delta}$ samples with replacement from the discrete distribution $\rho$ over $\left[ n \right]$. If $\left(V^T\right)_{:,\J}\Pi = QR$ is a CPQR factorization and $\C$ denotes the original indices of the first $k$ columns selected by the permutation $\Pi$, then with probability $1-\delta$
\[
\range\left\{\left(WW^T\right)_{:,\C} \right\} = \range\left\{W\right\}.
\]
\end{mylem}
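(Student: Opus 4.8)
The plan is to reduce the assertion to a combinatorial coverage statement about the random set $\J$, and then to rerun the argument of Lemma~\ref{lem:piv} on the column-subsampled matrix $(V^T)_{:,\J}$. Since $W$ is a normalized indicator matrix, $WW^T$ is block-constant: for $t\in\C_j$ the $t$-th column of $WW^T$ is $|\C_j|^{-1/2}W_{:,j}$, so $\range\{(WW^T)_{:,\C}\}=\range\{W\}$ holds precisely when $\C$ contains at least one node from every cluster, equivalently when $(W^T)_{:,\C}$ is nonsingular. As $\C\subseteq\J$, a necessary condition is that $\J$ itself meet every cluster, and it will turn out this is also (essentially) sufficient once it is fed into the Gu--Eisenstat bound \eqref{eq:fnk}. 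So I would split the proof into: (i) $\Pr[\,\J\text{ meets every cluster}\,]\ge 1-\delta$; and (ii) on that event, the CPQR of $(V^T)_{:,\J}$ returns a $\C$ with $(W^T)_{:,\C}$ nonsingular.

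For (i) I would use a coupon-collector union bound. Up to the normalization of $\rho$, the $\rho$-mass of a cluster $\C_j$ is proportional to $\sum_{i\in\C_j}\|V_{:,i}\|_2^2$, which the hypothesis bounds below, so a single draw lands in $\C_j$ with probability at least $\tfrac{1}{\gamma k}$. Hence with $m\equiv|\J|=\gamma k\log\tfrac{k}{\delta}$ independent draws, $\Pr[\,\C_j\text{ missed}\,]\le(1-\tfrac{1}{\gamma k})^m\le e^{-m/(\gamma k)}=\delta/k$, and summing over the $k$ clusters gives $\Pr[\,\text{some cluster missed}\,]\le\delta$. The same computation, refined by a Chernoff bound, also shows that each cluster is hit $\Omega(m/k)$ times with probability $\ge 1-\delta$; I record this because it is what gives room to spare in part (ii).

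For (ii) I would condition on the coverage event and imitate the proof of Lemma~\ref{lem:piv}. First, $(V^T)_{:,\J}$ has rank $k$: choosing one representative index per cluster gives $\C_0\subseteq\J$ for which $(W^T)_{:,\C_0}$ is, up to a column permutation, $\diag(|\C_1|^{-1/2},\dots,|\C_k|^{-1/2})$, so $\sigma_k((W^T)_{:,\C_0})\ge(\max_j|\C_j|)^{-1/2}$; combining this with the bound $\|W-VQ\|_F\le\sqrt k\,\epsilon+\bigO(\epsilon^2)$ from \eqref{eq:rotnorm} and with $\epsilon<2^{-k}/\sqrt n$ shows $\sigma_k((V^T)_{:,\C_0})>0$, and since deleting columns cannot raise the $k$-th singular value, $\sigma_k((V^T)_{:,\J})\ge\sigma_k((V^T)_{:,\C_0})>0$. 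Applying \eqref{eq:fnk} to $(V^T)_{:,\J}$ with $f(|\J|,k)=2^k\sqrt{|\J|}$ gives $\sigma_{\min}(R_1)\ge\sigma_k((V^T)_{:,\J})/(2^k\sqrt{|\J|})$ for the leading $k\times k$ block $R_1$. Writing $(WW^T)_{:,\C}=V(V^T)_{:,\C}+E_{:,\C}$ with $\|E_{:,\C}\|_2\le\|WW^T-VV^T\|_2\le\epsilon$ and noting $\sigma_{\min}(V(V^T)_{:,\C})=\sigma_{\min}(R_1)$, once $\sigma_{\min}(R_1)>\epsilon$ Weyl's inequality forces $\sigma_k((WW^T)_{:,\C})>0$, so $(W^T)_{:,\C}$ is nonsingular and $\range\{(WW^T)_{:,\C}\}=\range\{W\}$, exactly as in Lemma~\ref{lem:piv}. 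Intersecting with the event from (i) gives the conclusion with probability $1-\delta$.

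The main obstacle is the final inequality of (ii): showing $\sigma_{\min}(R_1)>\epsilon$, equivalently $\sigma_k((V^T)_{:,\J})>2^k\sqrt{|\J|}\,\epsilon$. Subsampling to $|\J|$ columns costs a factor $\sqrt{|\J|}$ in \eqref{eq:fnk}, so the crude bound $\sigma_k((V^T)_{:,\J})\gtrsim n^{-1/2}$ coming from a single representative per cluster does not by itself clear $\epsilon\approx 2^{-k}n^{-1/2}$ after dividing by $2^k\sqrt{|\J|}$. This is where the stronger output of (i) is needed: because leverage-score sampling spreads the draws roughly evenly over clusters, each cluster contributes $\Theta(|\J|/k)$ columns to $(V^T)_{:,\J}$, which pushes $\sigma_k((V^T)_{:,\J})$ up to order $\sqrt{|\J|/n}$ and restores the margin against $\epsilon$. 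Checking these constants, and confirming that the $\bigO(\epsilon^2)$ term from \eqref{eq:rotnorm} is negligible under $\epsilon<2^{-k}/\sqrt n$, is the only genuinely computational part of the argument.
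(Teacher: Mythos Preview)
Your approach is the paper's: a coupon-collector union bound showing $\J$ meets every cluster with probability at least $1-\delta$, followed by an appeal to Lemma~\ref{lem:piv}. The paper is far terser on step (ii)\textemdash after establishing coverage it simply states the conclusion ``follows immediately from Lemma~\ref{lem:piv}'' and does not revisit the Gu--Eisenstat bound or engage with the $\sigma_k\bigl((V^T)_{:,\J}\bigr)$ issue you raise\textemdash so the singular-value bookkeeping you add in (ii) is more careful than the paper's own argument.
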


\begin{proof}
The matrix $WW^T$ has $k$ distinct linearly independent columns, and because of the normalization the sum of square column norms for each set is $1$. Therefore, the distribution $\rho$ has mass $\frac{1}{k}$ on each set of linearly independent columns. Now, a simple coupon-collecting argument \cite{motwani2010randomized} over $k$ bins where the smallest mass a bin can have is $1 / \left(\gamma k \right)$ ensures that $\J$ contains $k$ linearly independent columns. The result now follows immediately from Lemma~\ref{lem:piv}.
\qquad
\end{proof}

\begin{myrem}
In either lemma, one may substitute the CPQR for a different RRQR \cite{chandrasekaran1994rank} or strong RRQR \cite{gu1996efficient} and inherit the appropriate upper bound on $\epsilon$ based on the corresponding $f(n,k)$.
\end{myrem}

By assumption, $W\approx VZ$ for some unknown orthogonal matrix $Z.$ The key aspect of Lemma~\ref{lem:piv} and Lemma~\ref{lem:piv_rand} is that the CPQR identifies sufficient information to allow the construction of an orthogonal matrix $U$ that is close to $Z.$

\begin{mythm}
\label{thm:main}
Let $W \in \R^{n\times k}$ be a normalized indicator matrix and suppose $n>4$. Let $V \in \R^{n\times k}$ with orthonormal columns satisfy
\[
\left\|WW^T - VV^T\right\|_2 \leq \epsilon
\]
with $\epsilon < 2^{-k} /\sqrt{n}$.  If $U$ is computed by the deterministic algorithm in subsection \ref{sec:alg_sub} applied to $V^T$, then there exists a permutation matrix $\widehat{\Pi}$ such that
\[
\| W\widehat{\Pi} - VU \|_F \leq \epsilon k \sqrt{n} \left(2+2\sqrt{2}\right) +\mathcal{O}\left(\epsilon^2\right).
\]
\end{mythm}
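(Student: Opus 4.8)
The plan is to reduce the Frobenius bound to a perturbation estimate for a $k\times k$ polar factorization. By \eqref{eq:rotnorm}, the hypothesis $\|WW^T - VV^T\|_2 \le \epsilon$ yields an orthogonal $Z \in \O_k$ with $\beta := \|W - VZ\|_F \le \sqrt{k}\,\epsilon + \bigO(\epsilon^2)$. Since $V$ has orthonormal columns, for any orthogonal $\widehat\Pi$ one has $\|W\widehat\Pi - VU\|_F \le \|(W - VZ)\widehat\Pi\|_F + \|V(Z\widehat\Pi - U)\|_F \le \beta + \|Z^T U - \widehat\Pi\|_F$, so it suffices to produce a permutation matrix $\widehat\Pi$ with $\|Z^T U - \widehat\Pi\|_F = \bigO(\sqrt n\,\beta)$. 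I would get this by observing that $Z^T U$ is the orthogonal polar factor of $Z^T(V^T)_{:,\C}$ (the Hermitian factor $H$ is unchanged because $Z$ is orthogonal), so the task is to compare the polar factors of the two nearby $k\times k$ matrices $Z^T(V^T)_{:,\C}$ and $(W^T)_{:,\C}$.

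First I would pin down $\widehat\Pi$. Lemma~\ref{lem:piv} gives $\range\{(WW^T)_{:,\C}\} = \range\{W\}$; since each column of $WW^T$ is a positive multiple of the normalized-indicator column of the corresponding node's cluster, the only way $k$ of these columns can span the $k$-dimensional $\range\{W\}$ is for $\C$ to contain exactly one node from each cluster. Therefore $(W^T)_{:,\C} = \widehat\Pi D$, where $\widehat\Pi$ is the permutation matrix recording which slot of $\C$ corresponds to which cluster and $D\succ 0$ is diagonal with entries the reciprocal square roots of the cluster sizes, so $\|D^{-1}\|_2 \le \sqrt n$. Because $\widehat\Pi$ is orthogonal and $D$ is positive definite, this is already a polar factorization: the orthogonal polar factor of $(W^T)_{:,\C}$ is $\widehat\Pi$ and its Hermitian factor is $D$. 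This is the permutation I would use in the theorem.

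Next I would compare the two polar factorizations. Restricting to the columns indexed by $\C$ and using that $Z$ is orthogonal, $\|Z^T(V^T)_{:,\C} - (W^T)_{:,\C}\|_F = \|((VZ - W)^T)_{:,\C}\|_F \le \beta$. By Lemma~\ref{lem:piv} (via the Gu--Eisenstat bound \eqref{eq:fnk}), $H$ is invertible with $\sigma_{\min}(H) \ge 2^{-k}/\sqrt n > 0$, so I can insert $Z^T(V^T)_{:,\C} = (Z^T U)H$ and $(W^T)_{:,\C} = \widehat\Pi D$ to write
\[
Z^T U - \widehat\Pi = (Z^T U)(D - H)D^{-1} + \big(Z^T(V^T)_{:,\C} - (W^T)_{:,\C}\big)D^{-1}.
\]
The second term has Frobenius norm at most $\beta\|D^{-1}\|_2 \le \sqrt n\,\beta$; for the first, $\|(Z^T U)(D-H)D^{-1}\|_F \le \|D - H\|_F\,\|D^{-1}\|_2$, and the standard perturbation bound for the Hermitian polar factor, $\||A|-|B|\|_F \le \sqrt 2\,\|A-B\|_F$, gives $\|D - H\|_F \le \sqrt 2\,\beta$. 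Hence $\|Z^T U - \widehat\Pi\|_F \le (1+\sqrt 2)\sqrt n\,\beta$, and the first paragraph then gives $\|W\widehat\Pi - VU\|_F \le \beta\big(1 + (1+\sqrt2)\sqrt n\big)$. Substituting $\beta \le \sqrt k\,\epsilon + \bigO(\epsilon^2)$ and using $\sqrt k \le k$ and $1 \le \sqrt n$ to bound $\sqrt k\big(1 + (1+\sqrt2)\sqrt n\big) \le (2 + 2\sqrt2)\,k\sqrt n$ yields the claimed estimate; the hypothesis $n>4$ is the mild condition (it forces $\epsilon < 2^{-k}/\sqrt n < 1/4$) that keeps the $\bigO(\epsilon^2)$ term genuinely lower order.

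The hard part will be the polar-factor comparison in the third step. Its whole quantitative content rests on $(V^T)_{:,\C}$ being well conditioned --- $\sigma_{\min} \ge 2^{-k}/\sqrt n$, i.e. $\|H^{-1}\|_2 \le 2^k\sqrt n$ --- which is exactly what Lemma~\ref{lem:piv} and the Gu--Eisenstat rank-revealing bound supply; without the column-pivoted pivot, $H^{-1}$ could be arbitrarily large and the estimate would be vacuous. I also expect some care to be needed in invoking the $\sqrt 2$ perturbation bound for the Hermitian polar factor and in checking that the permutation identified in the second step is literally the orthogonal polar factor of $(W^T)_{:,\C}$, since these are the places where the final constant $2+2\sqrt2$ is actually pinned down.
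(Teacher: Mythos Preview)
Your proposal is correct and follows essentially the same route as the paper: invoke Lemma~\ref{lem:piv} to see that $(W^T)_{:,\C}=\widehat\Pi D$ is already a polar factorization (the paper writes this as $Z\widehat\Pi R_I$), apply polar-factor perturbation theory to bound $\|Z\widehat\Pi-U\|_F$, and finish with the triangle inequality. The only cosmetic difference is that where the paper cites Higham's bound as a black box for the $(1+\sqrt2)$ constant, you spell it out via the identity $Z^TU-\widehat\Pi=(Z^TU)(D-H)D^{-1}+\big(Z^T(V^T)_{:,\C}-(W^T)_{:,\C}\big)D^{-1}$ together with the Hermitian-factor inequality $\||A|-|B|\|_F\le\sqrt2\,\|A-B\|_F$, which actually yields a slightly sharper intermediate estimate.
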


\begin{proof}
Let $\C$ denote the original indices of the first $k$ columns selected by the permutation $\Pi$. Based on the nonzero structure of $W$ and Lemma~\ref{lem:piv}, there exists a permutation matrix $\widehat{\Pi}$ such that $R_I\equiv \widehat{\Pi}^T\left(W^T\right)_{:,\C}$ is diagonal with positive diagonal entries. Our assumptions on $W$ and $V$ imply an that there exists an orthogonal matrix $Z$ such that 
\[
\left\|Z\widehat{\Pi}R_I - \left(V^T\right)_{:,\C}\right\|_F \leq \sqrt{k}\epsilon + \mathcal{O}\left(\epsilon^2\right)
\]
Now, observe that $\left(Z\hat{\Pi}\right)R_I$ is the polar factorization of a matrix. Using perturbation analysis for polar factorizations \cite{Higham_polar} yields
\[
\left\|Z\widehat{\Pi} - U\right\|_F \leq \epsilon\left(1+\sqrt{2}\right)k\sqrt{n} + \mathcal{O}\left(\epsilon^2\right).
\]  
Similar to above, our assumptions also imply that 
 \[
\left\|W\widehat{\Pi} - VZ\widehat{\Pi}\right\|_F \leq \sqrt{k}\epsilon + \mathcal{O}\left(\epsilon^2\right).
\]
Substituting $U$ for $Z\widehat{\Pi}$ and using the bound of their difference allows us to conclude the desired result.
\qquad
\end{proof}

\begin{mythm}
\label{thm:main_rand}
Let $W \in \R^{n\times k}$ be a normalized indicator matrix and suppose $n>4$. Let $V \in \R^{n\times k}$ with orthonormal columns satisfy
\[
\left\|WW^T - VV^T\right\|_2 \leq \epsilon
\]
with $\epsilon < 2^{-k} /\sqrt{n}$ and assume there exists $\gamma > 0$ such that
\[
\frac{1}{\gamma k} \leq \sum_{i\in\C_j}\left\|V_{:,i}\right\|^2_2
\]
for $j\in\kset$. If $U$ is computed following the randomized algorithm in subsection \ref{sec:alg_sub} then with probability $1-\delta$ there exists a permutation matrix $\widehat{\Pi}$ such that
\[
\| W\widehat{\Pi} - VU \|_F \leq \epsilon k \sqrt{n} \left(2+2\sqrt{2}\right) +\mathcal{O}\left(\epsilon^2\right).
\]
\end{mythm}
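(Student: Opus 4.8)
\textbf{Proof proposal for Theorem~\ref{thm:main_rand}.}

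The plan is to reduce this statement to Theorem~\ref{thm:main} by conditioning on the success event of the randomized pivoting lemma. The only structural difference between the randomized and deterministic algorithms is that the CPQR and the subsequent polar factorization act on the column-submatrix $(V^T)_{:,\J}$ rather than on all of $V^T$; once the pivot set $\C$ (indexing into the original $[n]$) has been chosen, steps~4--5 of the randomized algorithm produce exactly the same $U$ and the same cluster-assignment operator $U^TV^T$ that the deterministic algorithm would produce from that $\C$. So the work is to show that $\C$ is ``good'' with high probability and then to rerun the deterministic estimate.

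First I would invoke Lemma~\ref{lem:piv_rand}: under the stated hypotheses ($\epsilon < 2^{-k}/\sqrt n$ and the leverage-score lower bound $1/(\gamma k)\le \sum_{i\in\C_j}\|V_{:,i}\|_2^2$), with probability at least $1-\delta$ the pivot set $\C$ returned by the CPQR on $(V^T)_{:,\J}$ satisfies $\range\{(WW^T)_{:,\C}\}=\range\{W\}$. Since every column of $WW^T$ indexed within a single cluster $\C_j$ is the same vector $|\C_j|^{-1/2}w_j$ (where $w_j$ is the $j$th column of $W$), this range identity forces $\C$ to contain exactly one index from each cluster; consequently $(W^T)_{:,\C}$ has columns equal to distinctly scaled standard basis vectors, i.e.\ there is a permutation matrix $\widehat\Pi$ with $R_I\equiv\widehat\Pi^T(W^T)_{:,\C}$ diagonal and positive. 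Crucially, because $\C$ indexes the \emph{full} columns of $W^T$, the diagonal entries of $R_I$ are $|\C_j|^{-1/2}$ for the true cluster sizes, so $\sigma_{\min}(R_I)\ge 1/\sqrt n$, exactly as in the deterministic case.

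On this success event the rest of the argument is verbatim that of Theorem~\ref{thm:main}. By \eqref{eq:rotnorm} there is an orthogonal $Z$ with $\|W-VZ\|_F\le\sqrt k\,\epsilon+\bigO(\epsilon^2)$; restricting to the columns indexed by $\C$, applying the Frobenius isometry $\widehat\Pi$, and using that deleting columns cannot increase the Frobenius norm gives $\|Z\widehat\Pi R_I-(V^T)_{:,\C}\|_F\le\sqrt k\,\epsilon+\bigO(\epsilon^2)$. Since $(Z\widehat\Pi)R_I$ is a polar factorization, the perturbation theory for polar factors of Higham~\cite{Higham_polar}, together with $\sigma_{\min}(R_I)\ge 1/\sqrt n$, yields $\|Z\widehat\Pi-U\|_F\le\epsilon(1+\sqrt2)k\sqrt n+\bigO(\epsilon^2)$. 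Combining this (after multiplying by $\|V\|_2=1$) with $\|W\widehat\Pi-VZ\widehat\Pi\|_F\le\sqrt k\,\epsilon+\bigO(\epsilon^2)$ and the triangle inequality gives $\|W\widehat\Pi-VU\|_F\le\epsilon k\sqrt n(2+2\sqrt2)+\bigO(\epsilon^2)$, which is the claimed bound, holding with probability $1-\delta$.

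The main obstacle is bookkeeping rather than mathematics: one must ensure the sampling does not leak into any of the perturbation constants. The $\sqrt n$ factor in the final bound comes solely from $1/\sigma_{\min}(R_I)$, and it should be checked explicitly that this is controlled by the true cluster sizes $|\C_j|\le n$ and not by how many representatives of cluster $j$ happen to land in $\J$; similarly, the $\sqrt k\,\epsilon$ bound governing the polar-factor perturbation must be seen to descend from the global estimate $\|W-VZ\|_F$ and hence be unchanged by subsampling. Once these points are confirmed, the randomized theorem inherits the deterministic constant unchanged, with the sole addition of the $1-\delta$ qualifier supplied by Lemma~\ref{lem:piv_rand}.
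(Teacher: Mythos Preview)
Your proposal is correct and follows exactly the paper's approach: invoke Lemma~\ref{lem:piv_rand} in place of Lemma~\ref{lem:piv} to guarantee (with probability $1-\delta$) that the pivot set $\C$ hits each cluster once, then rerun the deterministic argument of Theorem~\ref{thm:main} verbatim. Your additional bookkeeping remarks about why the $\sqrt{n}$ and $\sqrt{k}\epsilon$ constants are unaffected by the subsampling are accurate and in fact more explicit than what the paper provides.
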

\begin{proof}
The proof mirrors that of Theorem~\ref{thm:main}, we simply use Lemma~\ref{lem:piv_rand} in place of Lemma~\ref{lem:piv} to ensure that the permutation $\widehat{\Pi}$ exists.
\qquad
\end{proof}

Theorems~\ref{thm:main} and~\ref{thm:main_rand} showt that if the subspace spanned by $V_k$ is close to one containing indicator vectors on the clusters, then both of the proposed algorithms yield an orthogonal transformation of $V_k$ approximating the cluster indicators. This representation of the subspace may then be directly used for cluster assignment. If desired, one may also use the computed clustering to seed the \texttt{k-means} algorithm and attempt to further improve the results with respect to the \texttt{k-means} objective function. This can either be accomplished by computing the cluster centers, or by using the points associated with the set $\C$ as the initial cluster centers.

\subsection{Connections to the SBM}
The preceding theory is rather general, as it only relies on normalized indicator structure of $W.$ The connection to the SBM is by virtue of $W$ representing the range of $\mathbb{E}A = M$, which is a rank $k$ matrix. Specifically, for a given instance of the SBM, Corollary 8.1.11 of Golub \& Van Loan \cite{golub} yields 
\begin{equation}
\label{eq:proj_error}
\|WW^T - V_kV_k^T\|_2 \leq \frac{4}{m(p-q)-p}\left\|\left(A-M\right)\right\|_2.
\end{equation}
We now restrict ourselves to the regime where $p,q\sim \frac{\log n}{n},$ and introduce the parametrization $p \equiv \alpha \frac{\log m}{m}$ and $q \equiv \beta \frac{\log m}{m}$ as in Abbe et al.~ \cite{abbe}. Note that here we assume a constant number of clusters $k$ and therefore $n\rightarrow\infty$ implies that $m\rightarrow\infty.$  In this regime, $\|A - M\|_2 \lesssim \sqrt{\log m}$ with probability tending to 1 as $m\rightarrow \infty.$
\begin{mythm}
\label{thm:concentration}
There exists a universal constant $C$ such that 
\[
P\left\{\|A-M\|_2 \geq \left(3\sqrt{2} \sqrt{\alpha + (k-1)\beta} + C\right)\sqrt{\log m} \right\} \rightarrow 0
\]
as $n \rightarrow \infty.$
\end{mythm}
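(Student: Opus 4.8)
The plan is to regard $A-M$ as a symmetric random matrix whose above-diagonal entries $\{(A-M)_{ij}\}_{i<j}$ are independent, centered, and bounded by $1$ in absolute value, with identically-zero diagonal, and to control its spectral norm with a sharp nonasymptotic matrix inequality. A naive matrix Bernstein estimate (e.g.\ \cite{tropp}) followed by a union bound over the sphere will not do here: in the regime $p,q\sim\log m/m$ the maximal row variance is itself $\Theta(\log m)$, so the Bernstein threshold is $\Theta(\log m)$ rather than the truth $\Theta(\sqrt{\log m})$. Instead I would invoke the refined bound of Bandeira \& van Handel \cite{sharp_bounds} (a Feige--Ofek-style $\epsilon$-net argument would serve equally well), which controls $\E\|A-M\|_2$ by roughly $2\sigma + c\,\sigma_*\sqrt{\log n}$, where $\sigma^2 = \max_i\sum_j\var(A_{ij})$ is the maximal row variance and $\sigma_* = \max_{ij}\bigl\|(A-M)_{ij}\bigr\|_\infty$.

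The next step is to estimate the two parameters. Trivially $\sigma_*\le 1$. For $\sigma^2$, fix a row $i$ lying in some cluster (of size $m$) and split its neighbors into within- and between-cluster ones; using $\var(A_{ij}) = M_{ij}(1-M_{ij})\le M_{ij}$, substituting $p = \alpha\log m/m$ and $q=\beta\log m/m$, and bounding the number of within-cluster neighbors by $m$ and between-cluster neighbors by $(k-1)m$, one gets $\sigma^2 \le mp + (k-1)mq = \bigl(\alpha + (k-1)\beta\bigr)\log m$. Hence $2\sigma \le 2\sqrt{\alpha+(k-1)\beta}\,\sqrt{\log m}$, and after absorbing the $(1+\epsilon)$ slack and numerical constants in \cite{sharp_bounds} the leading term is comfortably below $3\sqrt 2\,\sqrt{\alpha+(k-1)\beta}\,\sqrt{\log m}$. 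The remaining $\sigma_*\sqrt{\log n}$ contribution is $\bigO(\sqrt{\log m})$, since $n=km$ with $k$ fixed forces $\log n = \log m + \bigO(1)$; this, with the lower-order terms, is what the universal constant $C$ records.

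Finally I would transfer the resulting bound on $\E\|A-M\|_2$ into the stated high-probability statement. The map $A\mapsto\|A-M\|_2$ is a convex function of the independent edge indicators that is $\sqrt2$-Lipschitz in the Euclidean metric on the edge vector, so Talagrand's convex-distance concentration inequality gives $\bigl|\,\|A-M\|_2 - \E\|A-M\|_2\,\bigr| = \bigO_{\P}(1)$, a fluctuation of lower order than $\sqrt{\log m}$ and hence also absorbed into $C$. Combining, for $m$ large and $t = \bigl(3\sqrt2\sqrt{\alpha+(k-1)\beta} + C\bigr)\sqrt{\log m}$ we obtain $\P\{\|A-M\|_2 \ge t\}\to 0$, which is the claim.

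The main obstacle is precisely the first step: obtaining the $\sqrt{\log m}$ order (with a usable leading constant) rather than the $\log m$ that elementary matrix concentration yields, which genuinely requires the cancellations exploited in \cite{sharp_bounds} or in combinatorial $\epsilon$-net arguments. The remaining pieces are bookkeeping: handling possibly unequal cluster sizes (the bound $\sigma^2\le(\alpha+(k-1)\beta)\log m$ uses roughly balanced clusters, consistent with the Abbe et al.\ setup \cite{abbe}), and tracking constants through the passage from expectation to tail, neither of which affects the stated order or leading constant.
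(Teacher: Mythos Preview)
Your proposal is correct and is essentially the paper's own argument: both rely on the sharp Bandeira--van~Handel bound \cite{sharp_bounds} applied to $A-M$, together with the row-variance computation $\sigma^2\le(\alpha+(k-1)\beta)\log m$. The only cosmetic difference is that the paper cites Corollary~3.12 and Remark~3.13 of \cite{sharp_bounds}, which already package the tail bound (and yield the specific constant $3\sqrt{2}$ via the choice $\epsilon=1/2$), whereas you separate the expectation bound from a subsequent Talagrand concentration step---but this is exactly how Corollary~3.12 itself is proved, so the two routes coincide.
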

\begin{proof}
The result is a direct consequence of Corollary 3.12 and Remark 3.13 of Bandeira \& Van Handel \cite{sharp_bounds} applied to $A-M$. Using the notation of that reference, $C$ is selected such that $C^2 > \tilde{c}_{1/2}$.
\qquad
\end{proof}

\begin{mycor}
\label{cor:proj_concentration}
There exists a constant depending only on $\alpha$ and $\beta$, denoted $C_{\alpha,\beta},$ such that
\[
P\left\{\|WW^T-VV^T\|_2 \geq \frac{C_{\alpha,\beta}}{\left(\alpha\left(1-1/m\right)-\beta\right)\sqrt{\log m}} \right\} \rightarrow 0
\]
as $n \rightarrow \infty.$
\end{mycor}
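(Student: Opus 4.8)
The plan is to obtain the corollary by simply chaining the deterministic bound \eqref{eq:proj_error} with the high-probability bound on $\|A-M\|_2$ from Theorem~\ref{thm:concentration}. There is essentially no new probabilistic content here: the event in the corollary will be seen to be contained in the complement of the event controlled by Theorem~\ref{thm:concentration}, so the probability estimate transfers directly.

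First I would carry out the algebraic simplification of the denominator in \eqref{eq:proj_error} under the parametrization $p = \alpha\frac{\log m}{m}$, $q = \beta\frac{\log m}{m}$. A short computation gives
\[
m(p-q) - p = (\alpha-\beta)\log m - \alpha\frac{\log m}{m} = \left(\alpha\left(1 - \tfrac{1}{m}\right) - \beta\right)\log m,
\]
so that \eqref{eq:proj_error} becomes
\[
\|WW^T - V_kV_k^T\|_2 \leq \frac{4\,\|A - M\|_2}{\left(\alpha(1 - 1/m) - \beta\right)\log m}.
\]
Since $p>q$ forces $\alpha > \beta$ in the limit, this denominator is positive for all sufficiently large $m$; restricting to such $m$ costs nothing in an asymptotic statement.

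Next I would invoke Theorem~\ref{thm:concentration}: with probability tending to $1$ as $n\to\infty$ we have $\|A-M\|_2 \leq \left(3\sqrt{2}\sqrt{\alpha + (k-1)\beta} + C\right)\sqrt{\log m}$. On this event, substituting into the displayed inequality and cancelling one factor of $\sqrt{\log m}$ yields
\[
\|WW^T - V_kV_k^T\|_2 \leq \frac{4\left(3\sqrt{2}\sqrt{\alpha + (k-1)\beta} + C\right)}{\left(\alpha(1 - 1/m) - \beta\right)\sqrt{\log m}},
\]
and setting $C_{\alpha,\beta} \equiv 4\left(3\sqrt{2}\sqrt{\alpha + (k-1)\beta} + C\right)$ recovers exactly the bound claimed in the corollary. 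To finish, I would note that the event $\bigl\{\|WW^T - V_kV_k^T\|_2 \geq C_{\alpha,\beta}/\bigl((\alpha(1-1/m)-\beta)\sqrt{\log m}\bigr)\bigr\}$ is contained in $\bigl\{\|A-M\|_2 \geq (3\sqrt{2}\sqrt{\alpha + (k-1)\beta} + C)\sqrt{\log m}\bigr\}$, whose probability tends to $0$ by Theorem~\ref{thm:concentration}.

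The only mild point of care — not a genuine obstacle — is the bookkeeping around the $k$-dependence: the corollary advertises a constant depending ``only on $\alpha$ and $\beta$,'' which is accurate under the standing assumption that $k$ is a fixed constant, so the $\sqrt{\alpha + (k-1)\beta}$ factor may legitimately be absorbed into $C_{\alpha,\beta}$. Everything else is direct substitution into \eqref{eq:proj_error}.
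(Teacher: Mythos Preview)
Your proof is correct and follows exactly the same approach as the paper: combine the deterministic bound \eqref{eq:proj_error} with Theorem~\ref{thm:concentration} after substituting the parametrization of $p$ and $q$. Your bookkeeping is in fact slightly more careful than the paper's, which sets $C_{\alpha,\beta} = 3\sqrt{2}\sqrt{\alpha+(k-1)\beta}+C$ and silently drops the factor of $4$ from \eqref{eq:proj_error}; either choice suffices for the existential statement.
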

\begin{proof}
The result is obtained by combining Theorem~\ref{thm:concentration} with \eqref{eq:proj_error} and letting 
\[
C_{\alpha,\beta} = 3\sqrt{2} \sqrt{\alpha + (k-1)\beta} + C
\]
\qquad
\end{proof}

Corollary~\ref{cor:proj_concentration} gives an upper bound on $\|WW^T-VV^T\|_2$ that holds with high probability and decays asymptotically as $\frac{1}{\sqrt{\log m}}.$  It is important to note that this rate of decay is not sufficient to satisfy the assumptions required for Lemmas~\ref{lem:piv} and \ref{lem:piv_rand}.  Nevertheless, as evidenced by our numerical results our algorithms perform quite well in this sparse regime. In fact, under the assumption that our algorithm correctly identifies one point per cluster our results say that the root mean squared error $\frac{1}{n}\| W\widehat{\Pi} - VU \|_F$ decays like $\frac{1}{\sqrt{n \log n}}.$

In practice, Lemmas~\ref{lem:piv} and \ref{lem:piv_rand} are used to ensure that one column associated with each cluster is found. While matrices exist for which the worst-case CPQR bounds used are achieved, we conjecture that this does not occur for the structured matrices appearing in this application of CPQR. In fact, a stronger row wise error bound on $\|WQ - V\|_F$ for $Q$ that solves the orthogonal Procrustes problem would allow for the development of stronger results for our algorithm. We do not know of any sufficient results in the literature and are working to develop such bounds.

\subsection{Connections to OCS}
The preceding theory may also be connected to OCS. In particular, we may view the collection of orthogonal cone centers $\left\{q_j\right\}_{j=1}^k$ as an arbitrary rotation of the canonical basis vectors. This means that the indicator matrix $W$ is also relevant for the OCS model. However, now we need to assume that a fraction $1-\eta$ of the rows of $W$ are exactly zero, corresponding to nodes that do not belong to any cone. In practice, this motivates a potential modification of our algorithm to avoid assigning a cluster to any column of $\left\lvert U^TV_K^T \right\rvert$ where all of the entries are uniformly small.  

In the OCS framework, the prior theory carries through in a similar manner with a few small modifications we discuss here. In particular, Lemmas~\ref{lem:piv} and \ref{lem:piv_rand} must be modified to ensure that one column per cone is selected. We accomplish this in the following lemma by using the same conditioning argument as before and assuming that $\mu$ and $\delta$ are sufficiently small. 

\begin{mylem}
\label{lem:ocs_piv}
Let $V \in \R^{n\times k}$ have orthonormal columns with the columns $\{v_i\}_{i=1}^n$ of $V^T$ exhibiting OCS, specifically $\left\{v_i\right\}_{i=1}^n\in\ocs(\eta,\mu,\delta)$ with cone centers $\left\{q_i\right\}_{i=1}^k.$ Furthermore, let $c_M \equiv \max_{i\in\nset}\|v_i\|_2$ and assume that
\[
\delta c_M < \frac{2^{-k}}{\sqrt{n}} \quad \text{and}\quad\mu c_M^2 < \frac{2^{-2k-4}}{n}.
\]
If $V^T\Pi = QR$ is a CPQR factorization and $\C$ denotes the original indices of the first $k$ columns selected by the permutation $\Pi,$ then for each $j\in\kset$ the matrix $\left(V^T\right)_{:,\C}$ has exactly one column contained in $\K_{\mu}\left(q_j\right).$ 
\end{mylem}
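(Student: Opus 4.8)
The plan is to imitate the conditioning argument from the proof of Lemma~\ref{lem:piv}. Since $V$ has orthonormal columns, $\sigma_k(V^T)=1$, so Theorem~7.2 of Gu \& Eisenstat \cite{gu1996efficient} with $f(n,k)=2^k\sqrt{n}$ gives $\sigma_k(R_1)\ge 2^{-k}/\sqrt{n}$; because $(V^T)_{:,\C}=QR_1$ with $Q$ orthogonal, this is exactly $\sigma_{\min}((V^T)_{:,\C})\ge 2^{-k}/\sqrt{n}$. The whole argument then reduces to showing that if the $k$ pivot columns do \emph{not} consist of exactly one representative of each cone $\K_\mu(q_j)$, then $(V^T)_{:,\C}$ lies within distance strictly less than $2^{-k}/\sqrt{n}$ of a singular matrix, contradicting this lower bound (recall $\sigma_{\min}(M)=\min_{\|z\|_2=1}\|Mz\|_2$ for square $M$, so it suffices to test $\sigma_{\min}$ against a well-chosen unit vector).

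First I would record the two geometric consequences of the OCS hypothesis. If $v_i\in\K_\mu(q_j)$, writing $v_i=(v_i^Tq_j)q_j+w_i$ gives $\|w_i\|_2^2=\|v_i\|_2^2-(v_i^Tq_j)^2\le(2\mu-\mu^2)\|v_i\|_2^2\le 2\mu c_M^2$, so every in-cone column lies within $\sqrt{2\mu}\,c_M$ of the line through its center; if $v_i$ lies in no cone then $\|v_i\|_2\le\delta\min_{l}\max_{x_j\in\K_\mu(q_l)}\|x_j\|_2\le\delta c_M$. The hypothesis on $\mu$ also makes the cones pairwise disjoint (it suffices that $(1-\mu)^2>\tfrac12$, and $\mu c_M^2<2^{-2k-4}/n$ with $c_M^2\ge k/n$ forces $\mu<2^{-2k-4}/k$), so each column lies in at most one cone. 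Since $|\C|=k$ equals the number of cones, failure of the conclusion forces one of two cases: (a) some selected column $v_\ell$ lies in none of the cones, or (b) two selected columns $v_a,v_b$ lie in the same cone $\K_\mu(q_{j_1})$; if neither held, the $k$ selected columns would inject, hence biject, onto the $k$ cones, giving exactly one per cone.

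In case (a), testing $\sigma_{\min}$ on the standard basis vector picking out $v_\ell$ yields $\sigma_{\min}((V^T)_{:,\C})\le\|v_\ell\|_2\le\delta c_M<2^{-k}/\sqrt{n}$, a contradiction. In case (b), in-cone points are nonzero, so $v_a^Tq_{j_1},v_b^Tq_{j_1}>0$ and the two-sparse unit vector $z$ supported on coordinates $a,b$ proportional to $(v_b^Tq_{j_1},-v_a^Tq_{j_1})$ is well defined; it annihilates the $q_{j_1}$-components of those two columns, so $(V^T)_{:,\C}\,z=z_aw_a+z_bw_b$ and hence $\sigma_{\min}((V^T)_{:,\C})\le(|z_a|+|z_b|)\sqrt{2\mu}\,c_M\le 2\sqrt{\mu}\,c_M<2\cdot 2^{-k-2}/\sqrt{n}=2^{-k-1}/\sqrt{n}$, again a contradiction. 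Therefore neither case can occur and the conclusion follows.

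The step that needs care is the bookkeeping in case (b): one must perturb only the two offending columns, not all $k$, since perturbing all $k$ would introduce a spurious factor $\sqrt{k}$ and then $\mu c_M^2<2^{-2k-4}/n$ would no longer suffice — indeed the exponent is calibrated precisely so that $2^{-2k-4}=(2^{-k-2})^2$, making $2\sqrt{\mu}\,c_M<2^{-k-1}/\sqrt{n}<2^{-k}/\sqrt{n}$, and likewise $\delta c_M<2^{-k}/\sqrt n$ is exactly what case (a) needs. Everything else is a routine combination of the Gu \& Eisenstat lower bound on $\sigma_k(R_1)$ with the two elementary cone estimates above.
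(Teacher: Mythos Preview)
Your proof is correct and follows essentially the same approach as the paper: invoke the Gu--Eisenstat bound $\sigma_{\min}\bigl((V^T)_{:,\C}\bigr)\ge 2^{-k}/\sqrt{n}$, then derive a contradiction in each of the two failure cases. The only cosmetic difference is that in case~(b) the paper perturbs the two offending columns onto the line through $q_{j_1}$ to exhibit a singular matrix within distance $<2^{-k}/\sqrt{n}$, whereas you dually produce a two-sparse unit vector that $(V^T)_{:,\C}$ nearly annihilates; you also make the cone-disjointness check explicit, which the paper leaves implicit.
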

\begin{proof}
As before, the smallest singular value of $\left(V^T\right)_{:,\C}$ is bounded from below by $\frac{2^{-k}}{\sqrt{n}}.$ This precludes any column of $\left(V^T\right)_{:,\C}$ from having norm less than $\frac{2^{-k}}{\sqrt{n}}$ since $c_M < 1.$ In particular, this implies that no column is outside the cones $\K_{\mu}\left(q_j\right)$ from the definition of OCS. 

It remains to prove that no two columns of $\left(V^T\right)_{:,\C}$ come from the same cone. We proceed by contradiction. Assume $i_1,i_2 \in \C$ and $v_{i_1},v_{i_2}\in\K_{\mu}(q_j)$ for some $j.$ Using our upper bound on $\mu c_M$ we may conclude that
\[
\|v_{i_1} - q_jq_j^Tv_{i_1}\|_2 \leq \frac{2^{-k-2}}{\sqrt{n}}
\]
and the same inequality holds for $v_{i_2}.$ Therefore, there exists a perturbation of $\left(V^T\right)_{:,\C}$ with $\ell_2$ norm bounded by $\frac{2^{-k}}{\sqrt{n}}$ that makes two columns co-linear, which contradicts our lower bound on the smallest singular value. Finally, since $\lvert\C\rvert = k$ the result follows.
\qquad 
\end{proof}

Asserting that we can select one point per cone via Lemma~\ref{lem:ocs_piv} yields the following recovery theorem.
\begin{mythm}
\label{thm:ocs}
Let $V \in \R^{n\times k}$ have orthonormal columns, and assume the columns of $V^T,$ denoted $\left\{v_i\right\}_{i=1}^n\in\ocs(\eta,\mu,\delta)$ with cone centers $\left\{q_i\right\}_{i=1}^k.$ Furthermore, let $c_M \equiv \max_{i\in\nset}\|v_i\|_2$ and assume that
\[
\delta c_M < \frac{2^{-k}}{\sqrt{n}} \quad \text{and}\quad\mu c_M^2 < \frac{2^{-2k-4}}{n}.
\]
If $U$ is computed by the deterministic algorithm in subsection \ref{sec:alg_sub} applied to $V^T$, then there exists a permutation matrix $\widehat{\Pi}$ such that $U$satisfies
\[
\frac{\|\widehat{\Pi}U^Tv_i - Q^Tv_i\|_2}{\|v_i\|_2} \leq \frac{(2+\sqrt{2})k^{3/2}c_M\mu}{c_m} \quad \forall \; i\in\left[n\right]
\]
where the columns of $Q$ are the underlying cone centers $\left\{q_i\right\}_{i=1}^k$ and $c_m\equiv\min_{j\in\C}\|v_j\|_2$.
\end{mythm}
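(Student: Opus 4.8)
The plan is to follow the template of the proof of Theorem~\ref{thm:main}, substituting Lemma~\ref{lem:ocs_piv} for Lemma~\ref{lem:piv} and replacing the indicator/Procrustes step by a direct estimate coming from the cone geometry. First I would invoke Lemma~\ref{lem:ocs_piv}: its hypotheses $\delta c_M < 2^{-k}/\sqrt n$ and $\mu c_M^2 < 2^{-2k-4}/n$ are exactly those assumed here, so the set $\C$ of columns selected by the CPQR contains exactly one column of $V^T$ in each cone $\K_\mu(q_j)$. I would then let $\widehat\Pi$ be the permutation that reorders the columns of $(V^T)_{:,\C}$ so that the $j$-th reordered column $w_j$ lies in $\K_\mu(q_j)$, and record for later use that $\min_j\|w_j\|_2 = c_m \ge 2^{-k}/\sqrt n$ and $\sigma_k\big((V^T)_{:,\C}\big) \ge 2^{-k}/\sqrt n > 0$ (the singular-value lower bound already established inside the proof of Lemma~\ref{lem:ocs_piv}), so that the polar factorization $(V^T)_{:,\C} = UH$ has a uniquely determined orthogonal factor $U$.

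Next I would turn the cone structure into a near-diagonal form. Writing $Q = [\,q_1 \mid \cdots \mid q_k\,] \in \O_k$ and $D = \diag(\|w_1\|_2,\dots,\|w_k\|_2)$, the cone membership $w_j^T q_j \ge (1-\mu)\|w_j\|_2$ gives $\|\, w_j - \|w_j\|_2\, q_j \,\|_2^2 = 2\|w_j\|_2\big(\|w_j\|_2 - w_j^T q_j\big) \le 2\mu\|w_j\|_2^2$, so $(V^T)_{:,\C}\widehat\Pi = QD + R$ with $\|R\|_F$ controlled by $\mu$, $c_M$, and $k$. Since $D$ is a positive diagonal matrix, $QD$ is itself a polar factorization whose orthogonal factor is exactly $Q$, while $(V^T)_{:,\C}\widehat\Pi = (U\widehat\Pi)(\widehat\Pi^T H\widehat\Pi)$ exhibits $U\widehat\Pi$ as the orthogonal factor of the perturbed matrix.

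I would then apply the perturbation theory for the polar factorization \cite{Higham_polar} to the pair $QD$, $QD+R$: using $\sigma_k(QD) = \min_j\|w_j\|_2 = c_m$ as the relevant smallest singular value, this yields $\|U\widehat\Pi - Q\|_F$ bounded by a constant multiple of $\|R\|_F/c_m$, and a careful accounting of the constants — in the style of the proof of Theorem~\ref{thm:main} — produces the bound claimed in the statement. To conclude, for each $i$ I would use invariance of the spectral and Frobenius norms under transposition and under multiplication by the orthogonal permutation to write $\|\widehat\Pi U^T v_i - Q^T v_i\|_2 \le \|\widehat\Pi U^T - Q^T\|_2\,\|v_i\|_2 \le \|U\widehat\Pi^T - Q\|_F\,\|v_i\|_2$, which together with the previous bound (after possibly replacing $\widehat\Pi$ by $\widehat\Pi^T$, harmless since only existence of a permutation is claimed) gives the stated per-point estimate.

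The main obstacle is the polar-stability step: one must show that the orthogonal factor of the selected block of $V^T$ is stable under the perturbation $R$ that separates it from the idealized matrix $QD$, and the amplification in the polar perturbation bound is governed only by the small lower bound $c_m \ge 2^{-k}/\sqrt n$ on the singular values — which is precisely why the hypotheses on $\mu$ and $\delta$ must be exponentially small in $k$. Everything else, namely the cone-to-diagonal reduction and the norm bookkeeping, is routine once Lemma~\ref{lem:ocs_piv} has supplied the combinatorial fact that exactly one selected column lies in each cone.
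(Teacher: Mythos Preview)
Your proposal is correct and follows essentially the same route as the paper's proof: invoke Lemma~\ref{lem:ocs_piv} to get one selected column per cone, write $(V^T)_{:,\C}\widehat\Pi = QD + R$ with $D$ diagonal and $\|R\|_F$ controlled by the cone inequality, apply the polar-factorization perturbation bound \cite{Higham_polar} (using $\sigma_k\ge c_m$) to compare $U$ with $Q$ up to the permutation, and finish with the submultiplicative bound $\|\widehat\Pi U^T v_i - Q^T v_i\|_2 \le \|U\widehat\Pi^T - Q\|_F\|v_i\|_2$. The only cosmetic difference is that the paper moves the permutation to the other side, comparing the polar factorization of $(V^T)_{:,\C}$ directly with $(Q\widehat\Pi^T)(\widehat\Pi D\widehat\Pi^T)$; your observation that one may swap $\widehat\Pi$ for $\widehat\Pi^T$ since only existence of a permutation is claimed handles this.
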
 
\begin{proof}
Based on Lemma~\ref{lem:ocs_piv} there exists a permutation $\widehat{\Pi}$ such that
\[
\|\left(V^T\right)_{:,\C}\widehat{\Pi} - QD\|_F \leq \sqrt{2k}c_M\mu,  
\]
where $D$ is a diagonal matrix and $D_{ii}$ is equal to the two norm of the $i^{\text{th}}$ column of $\left(V^T\right)_{:,\C}.$ Equivalently we may assert that
\[
\left\|\left(V^T\right)_{:,\C} - Q\widehat{\Pi}^T\widehat{\Pi}D\widehat{\Pi}^T\right\|_F \leq \sqrt{2k}c_M\mu.
\] 
Thinking of $\left(Q\widehat{\Pi}^T\right)\left(\widehat{\Pi}D\widehat{\Pi}^T\right)$ as the polar factorization of some matrix divided into its orthogonal and positive definite parts, we use perturbation theory for polar factorizations \cite{Higham_polar} to bound the distance between $U$ and $Q\widehat{\Pi}^T$ as 
\[
\|U-Q\widehat{\Pi}^T\|_F \leq \frac{(2+\sqrt{2})k^{3/2}c_M\mu}{c_m}.
\]
Using 
\[
\|\widehat{\Pi}U^Tv_i - Q^Tv_i\|_2 \leq \|U-Q\widehat{\Pi}\|_F \|v_i\|_2
\]   
allows us to conclude the desired result.
\qquad
\end{proof}

This result implies that if $\mu$ and $\delta$ are small enough our algorithm can correctly assign the $\eta n$ nodes that belong to clusters, up to an arbitrary labeling encoded in $\widehat{\Pi}.$ While our CPQR analysis yields a lower bound on $\min_{j\in\C}\|v_j\|_2$ in terms of $k$ and $n$ it is often pessimistic in practice, so we leave our result in terms of the smallest column norm. Adding some assumptions about the total mass of nodes per cone allows Lemma~\ref{lem:ocs_piv} and Theorem~\ref{thm:ocs} to be modified in a similar manner to address the randomized variant of our algorithm.

Ultimately, these results are somewhat unwieldy and appear pessimistic when compared with the observed performance of our algorithm. It is also difficult to assert that a graph's eigenvectors will obey such structure (especially if we require small $\delta$ and $\mu$.) Later, we demonstrate the behavior of our algorithm on real graphs and its favorable performance when compared with standard spectral clustering methods.

\subsection{Connected components}

A limiting, and somewhat cleaner, scenario of the graph clustering problem is the graph partitioning problem. Here, we are given a graph that is comprised of $k$ disjoint connected components. A relatively simple problem is to partition the graph into these $k$ components. It turns out, given any orthonormal basis for the $k$-dimensional eigenspace associated with the zero eigenvalue of the normalized Laplacian our algorithm exactly recovers the partition (with high probability in the case of the randomized variant). While there are many algorithms for finding connected components, we find this to be an interesting property of our algorithm.   

Given a matrix $W\in\R^{n\times k}$ with orthonormal columns and at most one nonzero in each row, let $V\in\R^{n\times k}$ differ from $W$ by some rotation $Z\in\O_k$.   In this case, it is simple to modify the analysis of subsection~\ref{subsec:analysis} to show that our algorithm applied to $V$ will exactly recover $W$ up to a permutation of the columns. 

A more interesting observation is that given any $\tilde{k} < k$ dimensional subspace of the $k$-dimensional subspace associated with the zero eigenvalue of the normalized Laplacian our deterministic algorithm will partition the nodes into $\tilde{k}$ connected components. We prove this by showing that our algorithm necessarily places each connected component in the same cluster by virtue of its assignment step.
\begin{mythm}
\label{thm:connected_comp}
Let $V\in \R^{n\times \tilde{k}}$ have orthonormal columns, let $W\in R^{n\times k}$ have orthonormal columns with exactly one nonzero per row, and assume $\tilde{k}\leq k.$ If $V = WZ$ for some $Z\in \R^{k \times \tilde{k}}$ with orthonormal columns, then our deterministic algorithm partitions $\nset$ into $\tilde{k}$ clusters in a manner such that no two clusters contain rows of $W$ with the same sparsity pattern. 
\end{mythm}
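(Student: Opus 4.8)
The plan is to track how the assignment rule of step~4 acts on the rows of $W$, using only that each row of $W$ is a scalar multiple of a standard basis vector. Write $p(j)\in[k]$ for the coordinate of the unique nonzero in row $j$ of $W$, and set $z_p\equiv Z^{T}e_p\in\R^{\tilde k}$, i.e. the $p$-th row of $Z$ viewed as a column vector. Since $V=WZ$, the $j$-th column of $V^T$ is $(V^T)_{:,j}=(W_{j,:}Z)^{T}=W_{j,p(j)}\,z_{p(j)}$, a \emph{nonzero} scalar multiple of $z_{p(j)}$, and hence the $j$-th column of $U^TV^T$ is $W_{j,p(j)}\,U^{T}z_{p(j)}$. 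Because $c_j=\argmax_i\bigl(\lvert U^TV^T\rvert_{i,j}\bigr)$ is unchanged when its argument column is multiplied by a nonzero scalar, $c_j$ depends only on $p(j)$. Hence each cluster output by the algorithm is a union of the pattern classes $\{j:p(j)=p\}$, which is precisely the assertion that no two clusters contain rows of $W$ with the same sparsity pattern; this is the heart of the theorem.

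It remains to check that all $\tilde k$ labels are actually used, so that the output is a genuine $\tilde k$-way partition. By the rank-revealing mechanism underlying Lemma~\ref{lem:piv} (here with the trivial bound $\sigma_{\tilde k}(V^T)=1$), the Golub--Businger pivoting selects $\tilde k$ linearly independent columns of $V^T$; call the resulting nonsingular submatrix $B\equiv(V^T)_{:,\C}$, with columns in selection order. Two columns of $V^T$ coming from rows of $W$ with a common pattern $p$ are both nonzero multiples of $z_p$, hence parallel, so the $\tilde k$ columns of $B$ necessarily come from $\tilde k$ distinct patterns $p_1,\dots,p_{\tilde k}$. Let $B=UH$ be the polar factorization; then $U^TB=H$, so the $i_\ell$-th column of $U^TV^T$ equals the $\ell$-th column of the positive definite matrix $H$, and thus $c_{i_\ell}=\argmax_i\lvert H_{i\ell}\rvert$. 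If this map $\ell\mapsto\argmax_i\lvert H_{i\ell}\rvert$ is the identity — equivalently, if $H$ is \emph{column-diagonally dominant}, $H_{\ell\ell}\ge\lvert H_{i\ell}\rvert$ for every $i$ — then the $\tilde k$ selected nodes receive the $\tilde k$ distinct labels $1,\dots,\tilde k$, and by the previous paragraph every label is used.

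Showing that $H$ is column-diagonally dominant is the step I expect to be the main obstacle. The structure available is that $B=QR_1$ with $R_1$ upper triangular obeying the CPQR pivot inequalities ($R_{ii}^2\ge\sum_{l=i}^{j}R_{lj}^2$ for $j\ge i$, so in particular $\lvert R_{ii}\rvert\ge\lvert R_{ij}\rvert$), that $H=(R_1^{T}R_1)^{1/2}$, and that every column of $V^T$ has $\ell_2$ norm at most $1$ — it is a row of $V=WZ$ with $\lVert Z\rVert_2\le1$ and $\lVert W_{j,:}\rVert_2=\lvert W_{j,p(j)}\rvert\le1$ — so $\lVert B\rVert_2\le1$ and $0\prec H\preceq I_{\tilde k}$. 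The crux is that greedy largest-residual pivoting cannot select two nearly parallel columns from the tight frame $\{z_p\}_{p=1}^{k}$ (a quantitative form of rank-revealing), which forces $B$, and hence $R_1$, close enough to diagonal that $(R_1^{T}R_1)^{1/2}$ inherits the dominance of $R_1$'s diagonal over each of its columns; for small $\tilde k$ one can check this directly from the explicit matrix square-root formulas together with the frame identity $\sum_{p}z_pz_p^{T}=I_{\tilde k}$, which constrains the Gram matrix of any $\tilde k$ of the $z_p$. I would also note that with the $Q$-factor variant of the algorithm (the remark in Section~\ref{sec:alg_sub}) the $i_\ell$-th column of $Q^TV^T$ is simply the $\ell$-th column of $R_1$, and the whole point reduces to the cleaner statement that the diagonal of $R_1$ dominates each of its columns.
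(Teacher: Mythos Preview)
Your first paragraph is exactly the paper's proof: columns of $V^T$ indexed by rows with the same sparsity pattern are co-linear (all proportional to the same row of $Z$), so after applying any orthogonal $U$ and taking entrywise absolute values their argmax coincides; hence a pattern class can never be split across two output clusters. That is the entire content of the paper's argument for this theorem.

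The remainder of your proposal tries to establish something the paper does \emph{not} prove: that all $\tilde k$ labels are actually used, i.e., that the partition has exactly $\tilde k$ nonempty parts. The paper explicitly defers this to the remark following the theorem, saying only that ``in general the use of the CPQR should ensure that all of the output clusters will be nonempty, however, this depends on properties of $Z$.'' In other words, the phrase ``partitions $[n]$ into $\tilde k$ clusters'' in the theorem statement is being read loosely by the authors as ``assigns labels from $[\tilde k]$,'' and your worry about column-diagonal dominance of $H=(R_1^TR_1)^{1/2}$ is precisely the obstruction they decline to resolve. Your instinct that this is the hard step is correct, and the sketch you give (CPQR pivot inequalities plus the Parseval frame identity $\sum_p z_p z_p^T = I_{\tilde k}$) does not close the gap in general --- but neither does the paper, so you are not missing an idea that appears there. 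If you restrict to the core assertion about sparsity patterns, your argument is complete and matches the original.
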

\begin{proof}
Let $\mathcal{S}_i \subset \nset$ denote the support of column $i$ of $W.$ Using this notation, we see that the columns of $\left(V^T\right)_{:,\mathcal{S}_i}$ are co-linear \textemdash they are all proportional to the $i^{\text{th}}$ column of $Z^T.$ 
This implies that given any orthogonal matrix $U,$ each column of $\left\lvert U^T\left(V^T\right)_{:,\mathcal{S}_i} \right\rvert$ attains its maximum in the same row. Therefore, we see that each connected component is assigned to a single cluster using our algorithm

\qquad
\end{proof}
\begin{myrem}
In general the use of the CPQR should ensure that all of the output clusters will be nonempty, however, this depends on properties of $Z.$ Intuitively, because $U$ is the closest orthogonal matrix to $\tilde{k}$ different well-conditioned columns of $Z^T$ at least one column of $Z^T$ will align closely with one column of $U.$  
\end{myrem}

Interestingly, this property is not shared by the \texttt{k-means} algorithm in the presence of degree heterogeneity. In fact, given an initial guess that properly partitions the nodes into disconnected sets, \texttt{k-means} may move to a local minimum of its objective function that does not respect the proper partition. Intuitively, this may occur when it is advantageous to place a cluster with a center near the origin that collects low degree nodes irrespective of cluster. While a simple thought experiment, it provides additional justification for using angular relationships between embedded nodes rather than a distance-based clustering.

\section{Numerical results}
We now present numerical simulations to validate the performance of our algorithm through examination of its behavior for multi-way spectral clustering. All of our experiments were conducted using MATLAB\textsuperscript{\textregistered} and the included \texttt{k-means++} implementation was used as a point of comparison. If \texttt{k-means} failed to converge after 100 iterations (the default maximum) we simply took the output as-is. Code implementing our methodology and these experiments may be found at \url{https://github.com/asdamle/QR-spectral-clustering}.

\subsection{SBM}
We first consider the SBM with $k=9$ equisized clusters and show via simulation that our algorithm, in contrast to \texttt{k-means++}, recovers the phase transition behavior near the information theoretic threshold \cite{abbe,abbe2015community,hajek2015achieving,agarwal2015multisection}. We also demonstrate similar phase transition behavior when there are $k=7$ unequisized clusters.

For our tests with the SBM we compare the behavior of three different clustering schemes: our algorithm,  \texttt{k-means} using the columns $\C$ from our algorithm as an initial guess for cluster centers, and \texttt{k-means++} itself. We exclusively utilize the randomized variant of our algorithm with an oversampling factor of $\gamma = 5.$ For more general problems, the non-random method may perform better, however, in this context they performed almost identically and hence the deterministic results are omitted.

Interestingly, in the regime where $p$ and $q$ are both $\Theta(\log n / n)$ we do not necessarily expect rapid enough asymptotic concentration of $A$ for our results to theoretically guarantee recovery to the information theoretic limit. Nevertheless, we do observe good behavior of our algorithm when applied in this setting and, as we will see, the same behavior is not observed with \texttt{k-means++}. 

One major avenue of work in recent years has been in the area of semidefinite programming (SDP) relaxation for clustering \cite{hajek2016achieving,abbe2015community,abbe}.  Broadly speaking, such relaxations recast \eqref{eq:combo} in terms of $XX^T$ and then relax $XX^T$ to a semidefinite matrix $Z$. These SDP relaxations often enjoy strong consistency results on recovery down to the information theoretic limit in the case of the SBM, in which setting the optimal solution $Z_*$ can be used to recover the true clusters exactly with high probability. However, these algorithms become computationally intractable as the size of the graph grows.

\subsubsection{Equisized clusters}\label{sec:equisized}
First, we consider $k=9$ equisized clusters each with size $m=150$ (\emph{i.e.}, $n=1350$) and within-cluster and between-cluster connection probabilities ${p = \alpha \log m / m}$ and ${q = \beta \log m / m}$, respectively. Defining a grid of 80 equispaced values of $\alpha$ and 40 equispaced values of $\beta$, we generated 50 instances of the SBM for each $(\alpha,\beta)$ pair (redrawing the adjacency matrix if it was disconnected).  Then, for both the adjacency matrix $A$ and the degree-normalized adjacency matrix $A_N$ with entries $(A_N)_{ij}\equiv A_{ij} / \sqrt{d_id_j}$, where $d_i$ is the degree of node $i$, we computed the top nine eigenvectors and used them as input to each of the three algorithms mentioned before.  While for degree-regular graphs we do not expect to observe differences in the results between $A$ and $A_N$, when applied to graphs not coming from the SBM or that are not as degree-regular we anticipate better performance from degree-normalization.

In the plots of Figure~\ref{fig:9block} we color each point according to the fraction of trials resulting in successful recovery for each $(\alpha,\beta)$ pair, \ie, the fraction of trials in which all nodes were correctly clustered. These phase diagrams show the primary advantage of our algorithm over \texttt{k-means++}: robust recovery (where possible). Whether used to explicitly compute the clustering directly, or to seed \texttt{k-means}, we cleanly recover a sharp phase transition that hews to the theory line. In contrast, \texttt{k-means++} fails to exactly recover the clusters a substantial portion of the time, even far from the phase transition.

\begin{figure}[ht!]
\centering
\includegraphics[width=.45\linewidth]{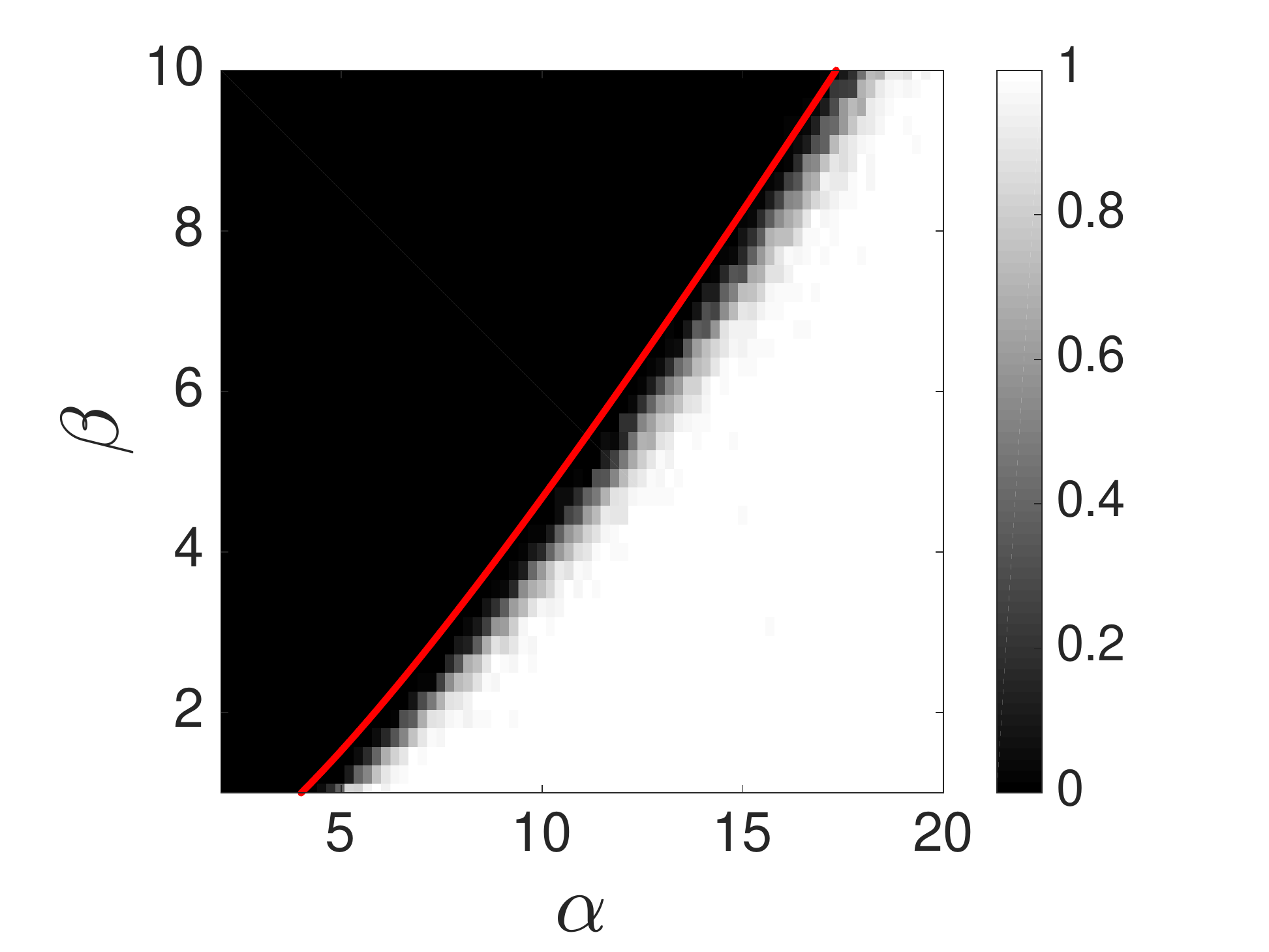}
\includegraphics[width=.45\linewidth]{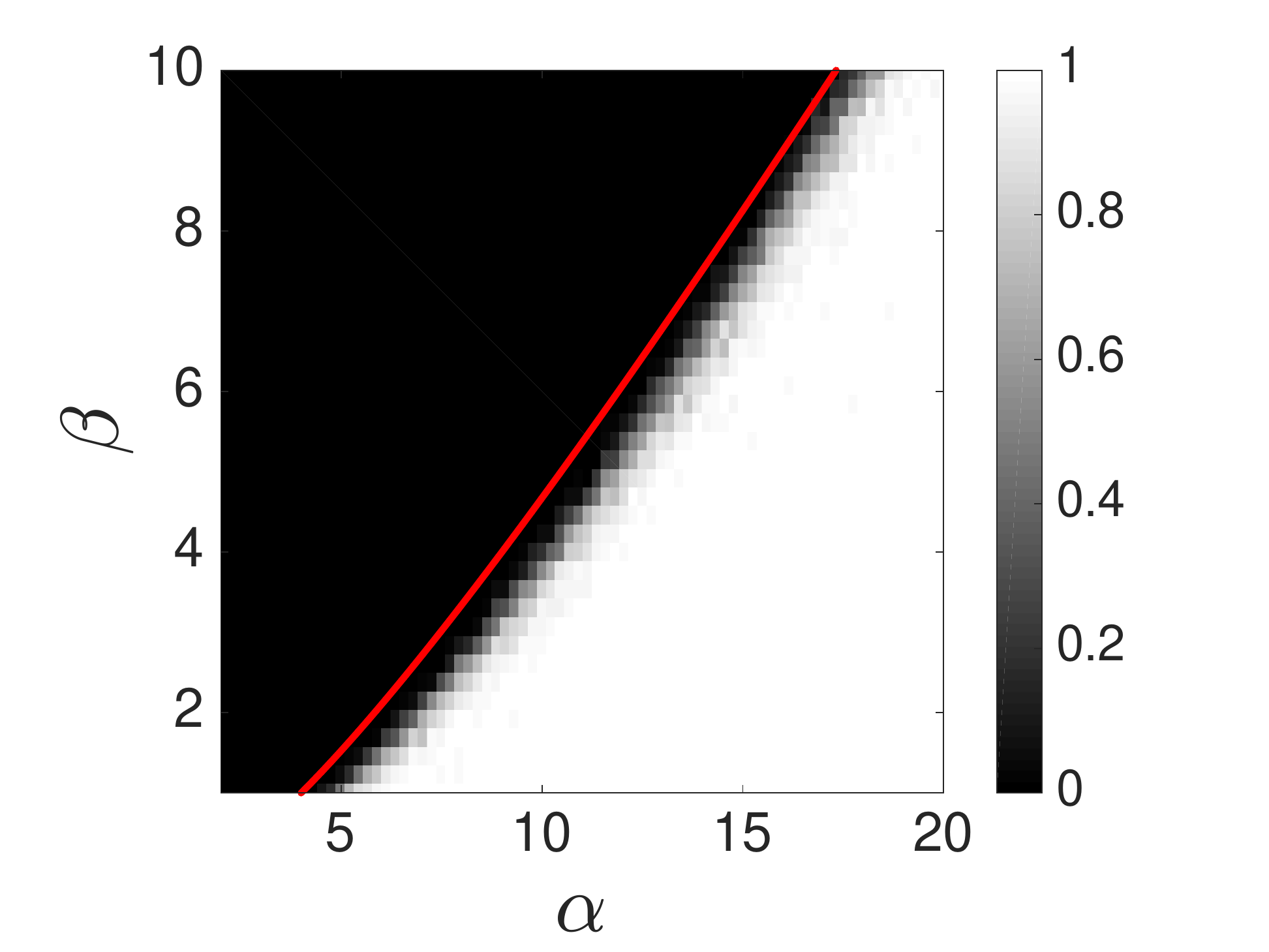}
\includegraphics[width=.45\linewidth]{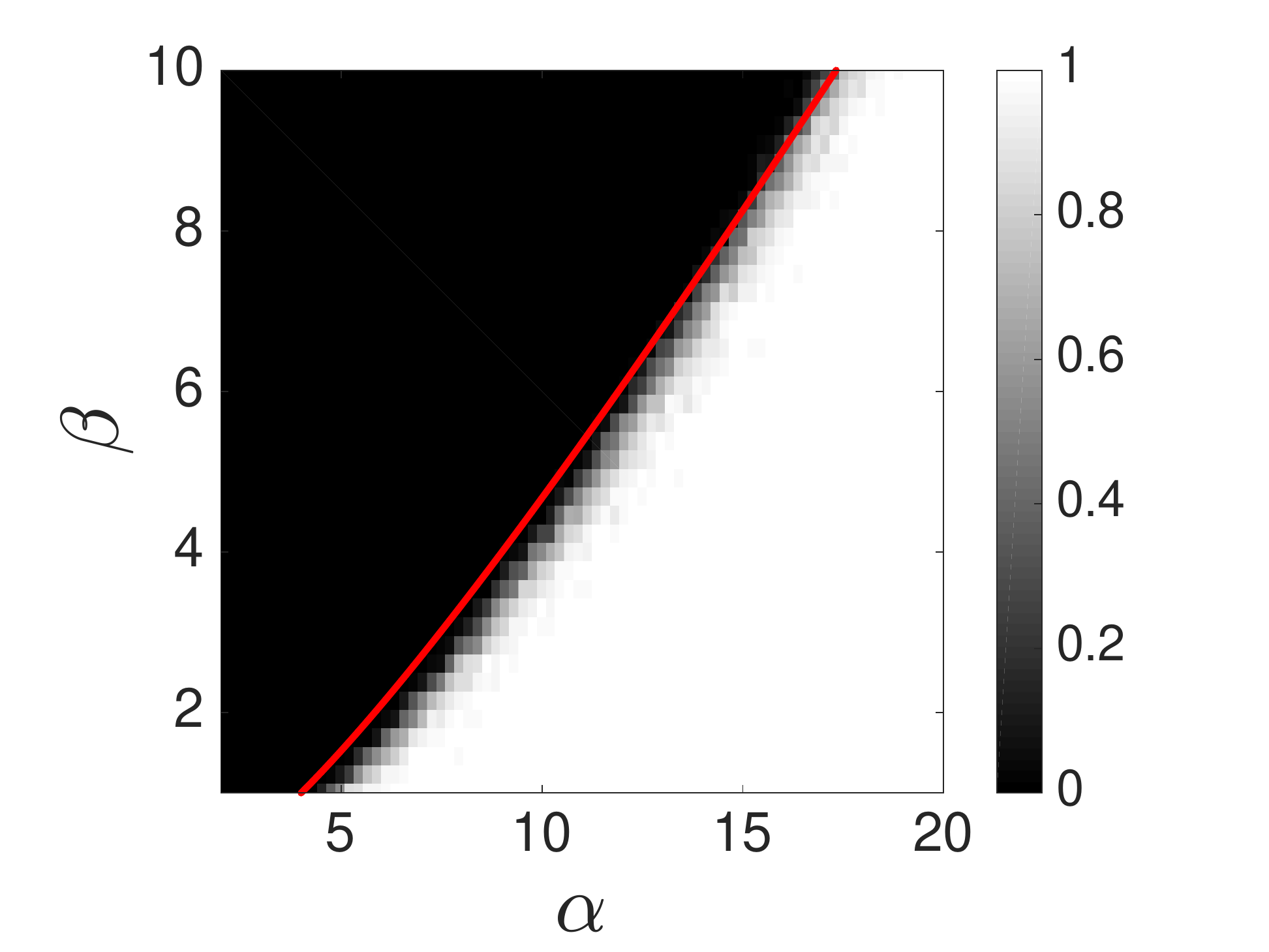}
\includegraphics[width=.45\linewidth]{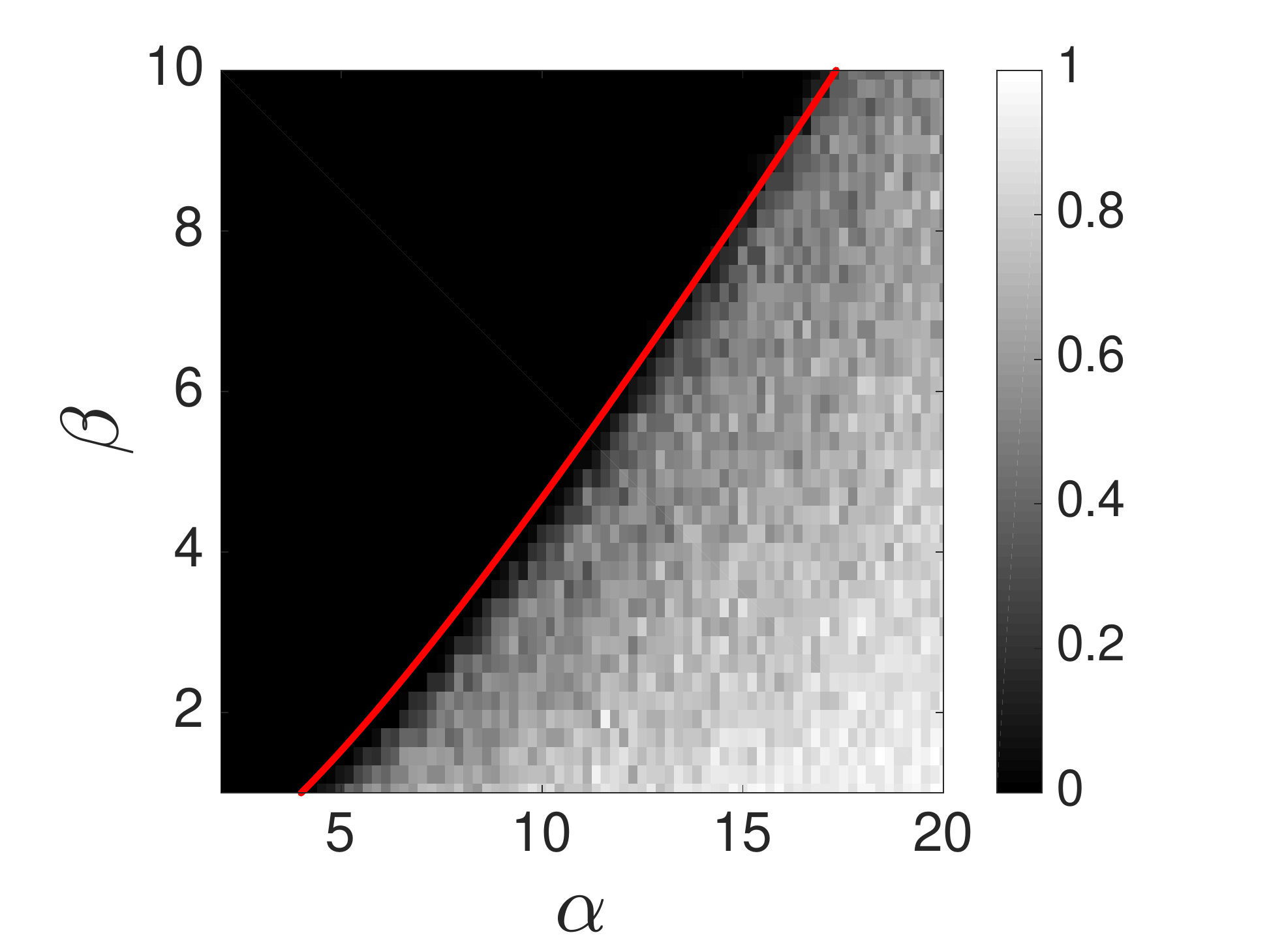}
\caption{The phase plots here show the fraction of trials exhibiting exact recovery in the case of equisized clusters, and the asymptotic transition curve $\sqrt{\alpha} - \sqrt{\beta} = 1$ is given in red. The top row of phase plots corresponds to our randomized algorithm applied to eigenvectors $V_k$ of $A$ (top-left) and of $A_N$ (top-right).  We also give results based on seeding \texttt{k-means} with the clustering $\C$ from our algorithm applied to eigenvectors from $A_N$ (bottom-left), and results for vanilla \texttt{k-means++} (bottom-right).}
\label{fig:9block}
\end{figure}

Theoretically, if we scale $k$ as $k=o(\log m)$ and take the asymptotic limit as $m\to\infty$ then a sharp phase transition between where recovery is possible almost surely and where it is not occurs at the threshold curve $\sqrt{\alpha} - \sqrt{\beta} = 1$ \cite{agarwal2015multisection}. (Note that similar results exist elsewhere \cite{hajek2015achieving}, albeit with a slightly different definition of $\alpha$ and $\beta$.) In Figure~\ref{fig:9block} we observe that our phase transition does not quite match the asymptotically expected behavior. However, based on results for the two-block case \cite{abbe}, various arguments \cite{agarwal2015multisection} show that we expect the location of the recovery threshold in the finite case to deviate from the asymptotic threshold with error that decays only very slowly, \ie, $\Omega(1/\log m)$. 

To further explore the discrepancy between our algorithms performance and the asymptotic theory we consider a slightly different criteria than exact recovery. A natural metric for a good graph clustering is to minimize the number of edges between clusters while normalizing for cluster size. Specifically, we define the multi-way cut metric for a $k$-way partition of nodes into nonempty sets $S_1,S_2,\ldots,S_k$ as
\begin{equation}
\label{eq:cut_metric}
\max_{i=1,\ldots,k} \frac{\#\{\text{Edges between $S_i$ and $\overline{S_i}$}\}}{\lvert S_i \rvert}.
\end{equation} 
Figure~\ref{fig:comp_true} shows the fraction of times that our algorithm yields as good or better a multi-way cut metric than the true clustering. Near the threshold we may not be recovering the underlying clustering, but perhaps we should not expect to since we are often finding a better clustering under a slightly different metric. 

\begin{figure}[ht!]
\centering
\includegraphics[width=.5\linewidth]{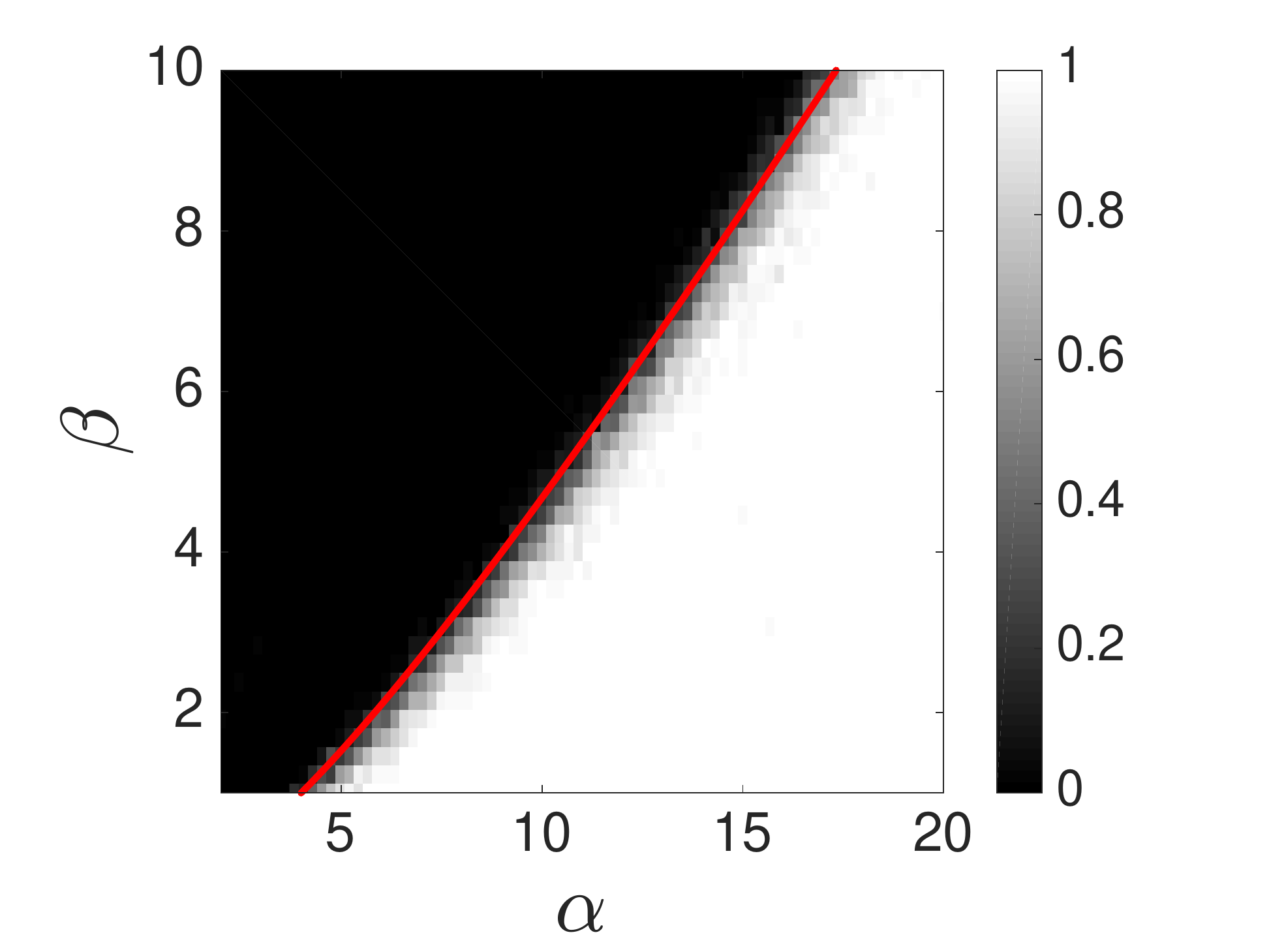}
\caption{The phase plot shows the fraction of trials (for equisized clusters) where application of our algorithm to eigenvectors $V_k$ of $A$ yields a multi-way cut metric \eqref{eq:cut_metric} less than or equal to that of the true underlying clusters. The asymptotic transition curve $\sqrt{\alpha} - \sqrt{\beta} = 1$ is given in red.}
\label{fig:comp_true}
\end{figure}

\subsubsection{Unequisized clusters}
Secondly, we consider the case where there are $k=7$ \emph{unequisized} clusters with sizes $m = 70, 80, 90, 100, 110, 120,$ and $130$. In this case we define the within- and between-cluster connection probabilities as ${p = \alpha \log 70 /70}$ and ${q = \beta \log 70 /70}$. As before, we test the algorithms for exact recovery and present the phase plots in Figure~\ref{fig:7block_unequal}. As before, our algorithm shows a sharp phase transition whereas \texttt{k-means++} does not. Such behavior is still expected \cite{abbe2015community}, though the characterization of its location becomes significantly more complicated. Importantly, our algorithm seamlessly deals with unknown cluster sizes.

\begin{figure}[ht!]
\centering
\includegraphics[width=.45\linewidth]{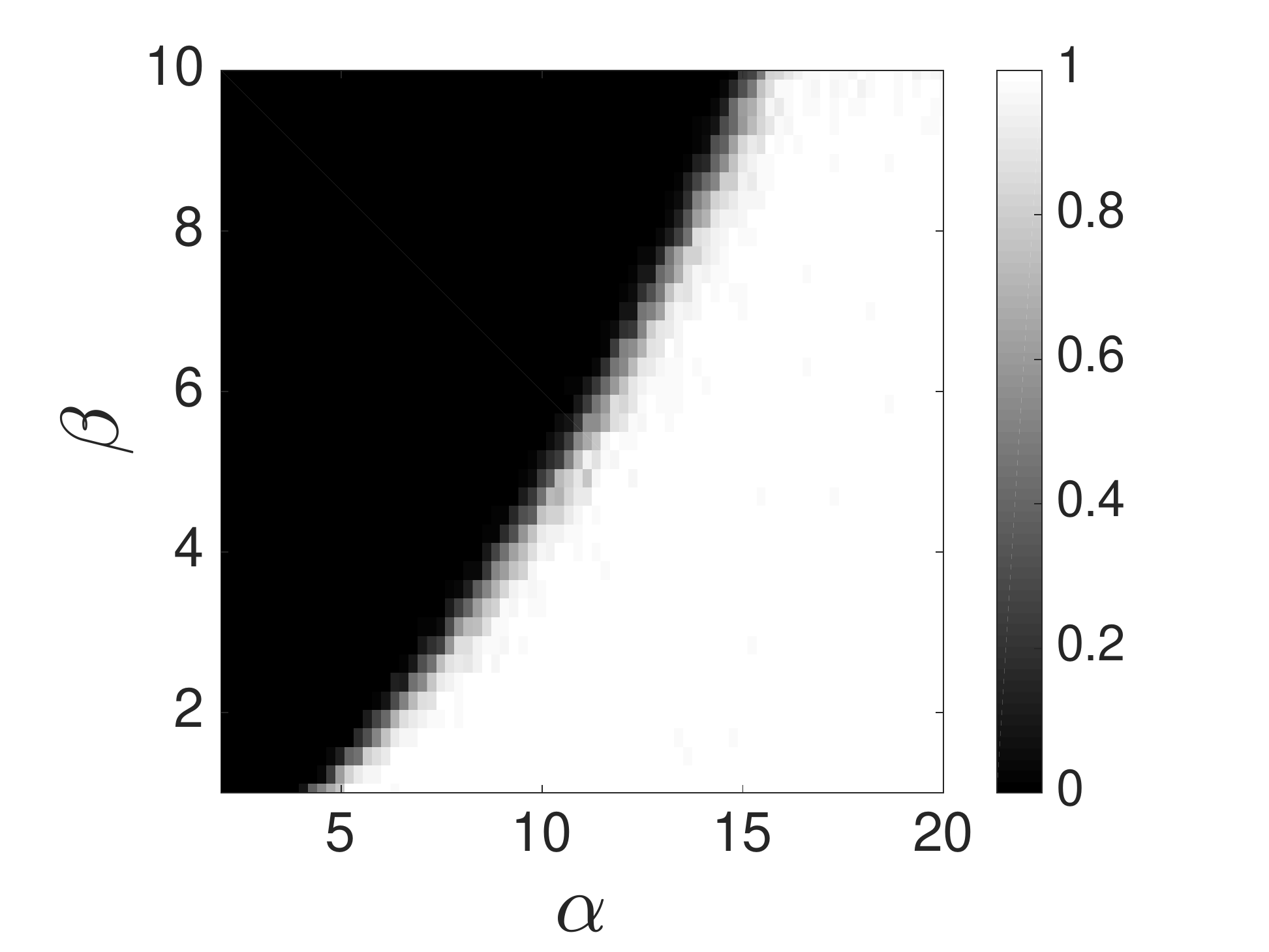}
\includegraphics[width=.45\linewidth]{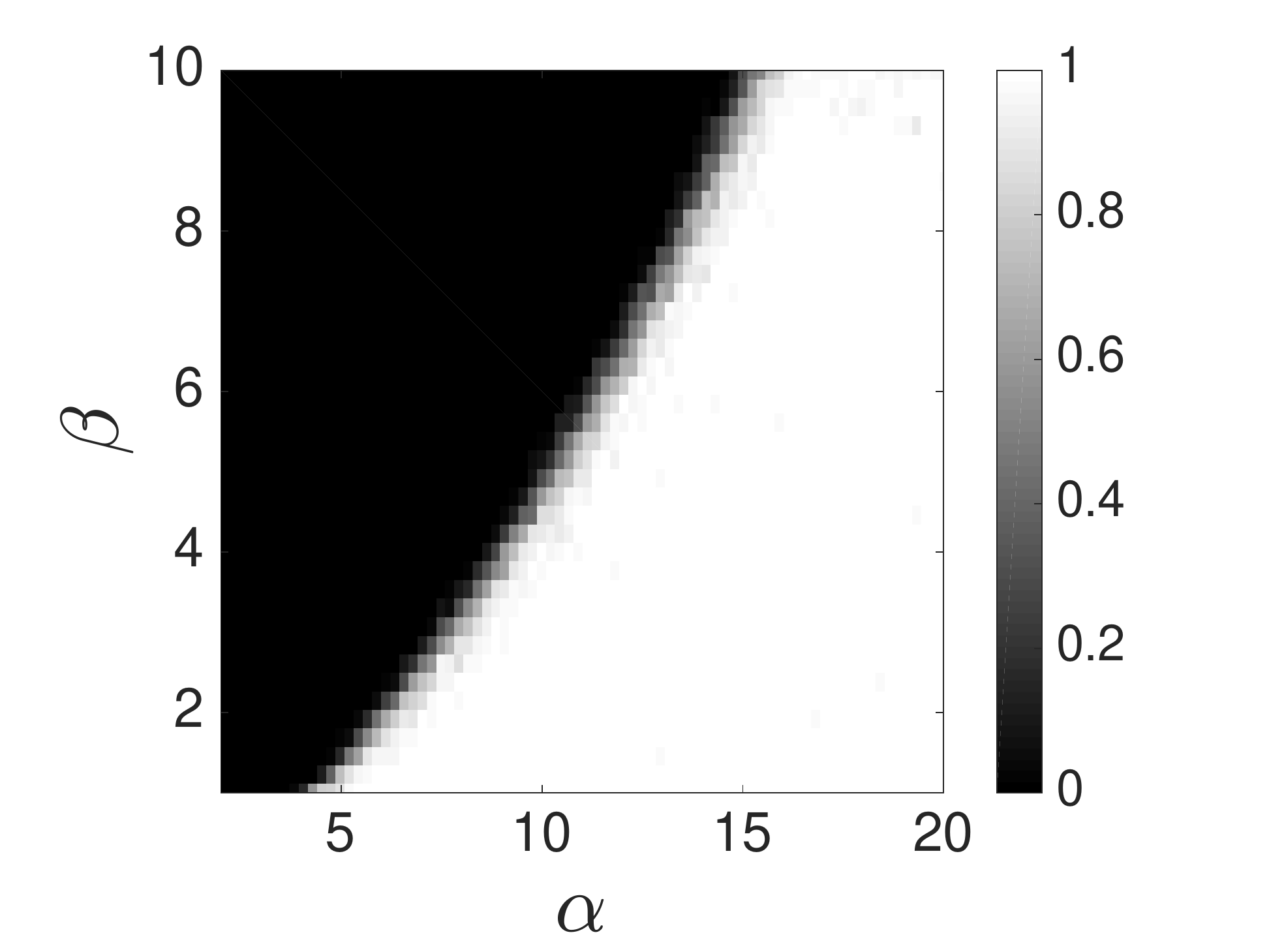}
\includegraphics[width=.45\linewidth]{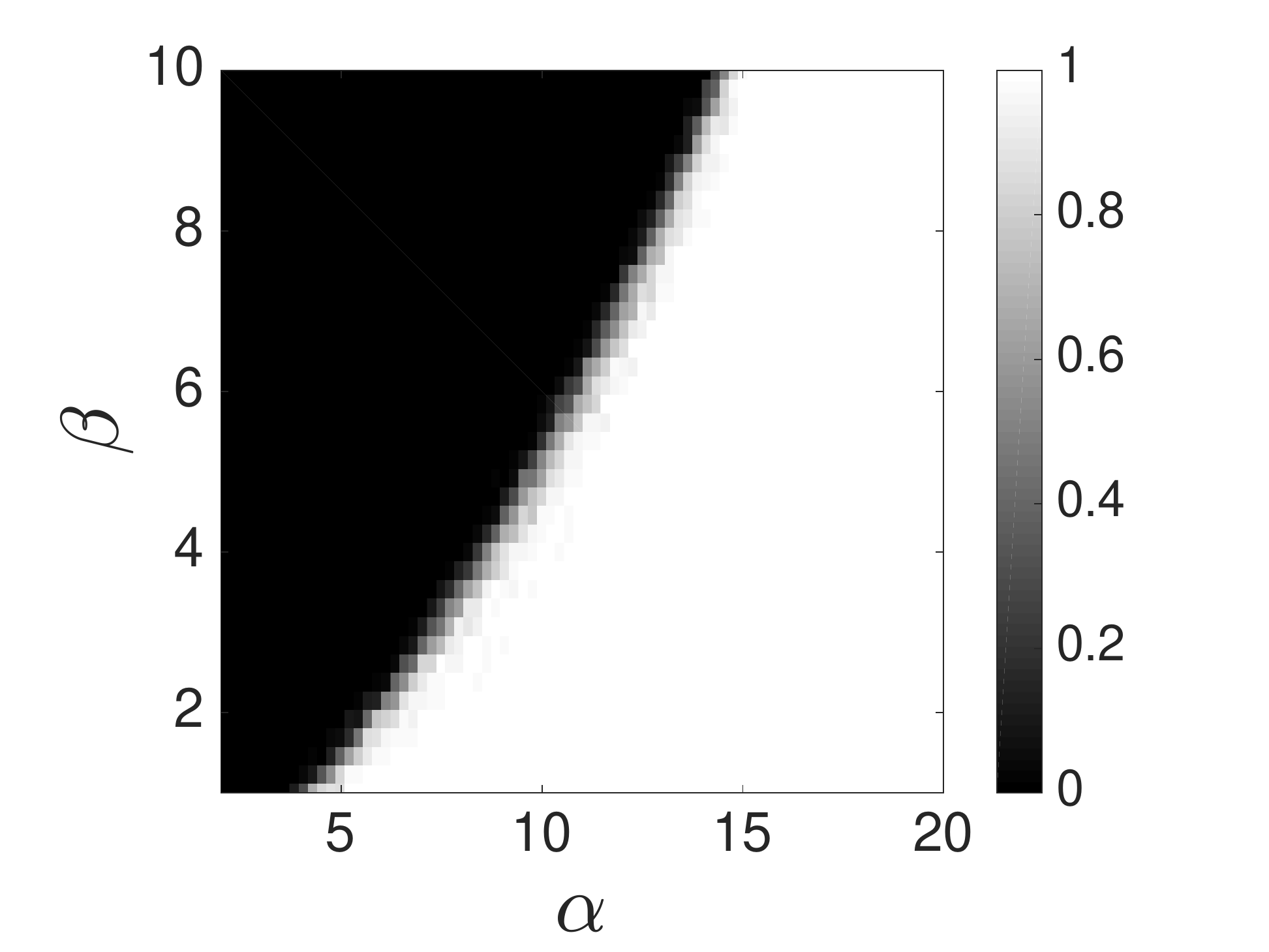}
\includegraphics[width=.45\linewidth]{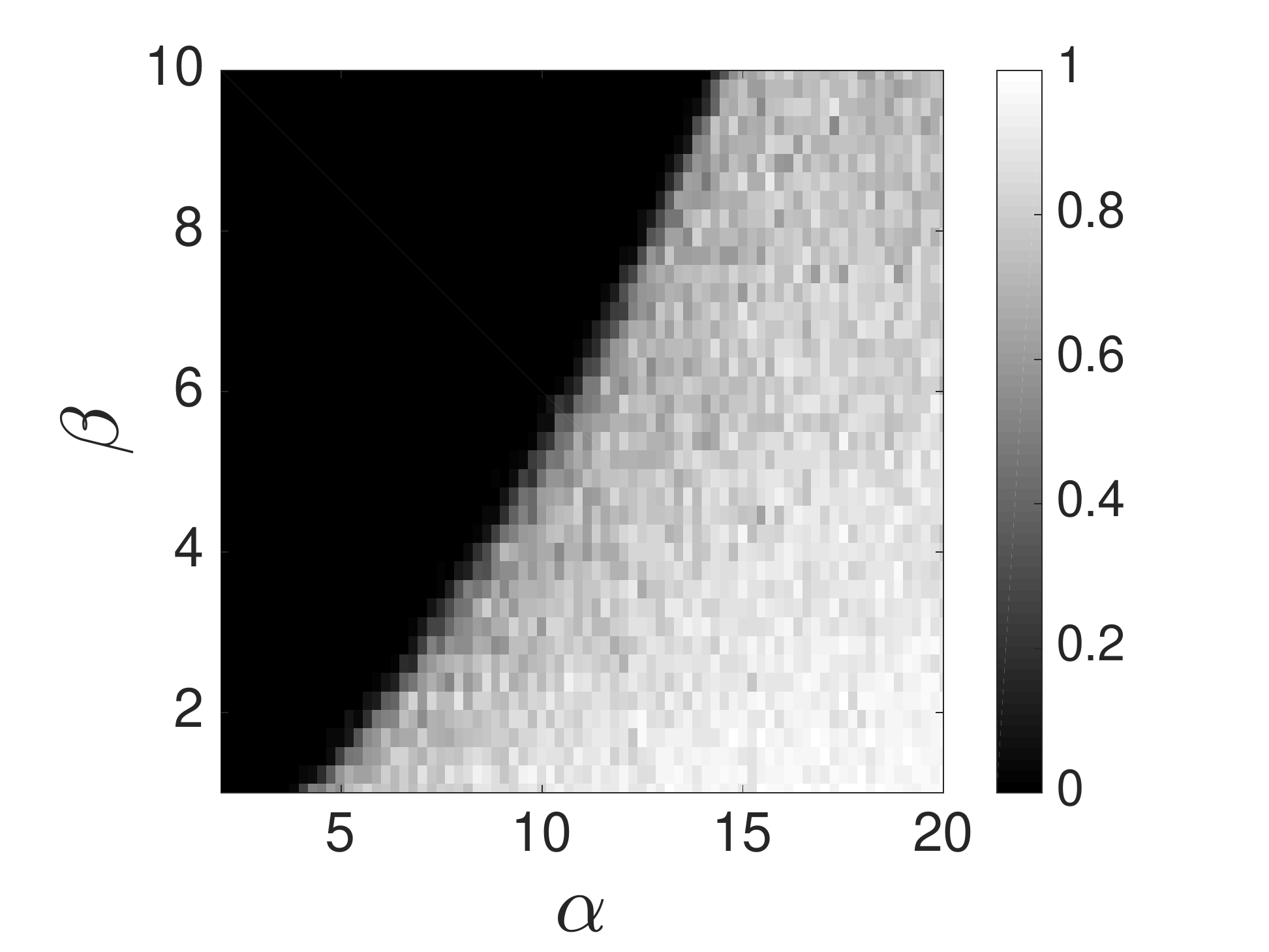}
\caption{The phase plots here show the fraction of trials exhibiting exact recovery in the case of unequisized clusters. The top row of phase plots corresponds to our randomized algorithm applied to eigenvectors $V_k$ of $A$ (top-left) and of $A_N$ (top-right).  We also give results-based on seeding \texttt{k-means} with the clustering $\C$ from our algorithm applied to eigenvectors from $A_N$ (bottom-left), and results for vanilla \texttt{k-means++} (bottom-right).}
\label{fig:7block_unequal}
\end{figure}

\subsection{A real world graph}
We complement our experiments using the SBM by demonstrating the application of our algorithm to a real world graph and comparing its performance with that of \texttt{k-means++}. To measure performance we consider both the \texttt{k-means} objective function \textemdash the sum of squared distances from points to their cluster centers \textemdash and the multi-way cut metric \eqref{eq:cut_metric}. Since it is a metric on the graph itself (as opposed to the embedding), minimization of the multi-way cut metric over possible partitions is the more interpretable metric. Furthermore, as we will observe, a smaller \texttt{k-means} objective value does not necessarily translate into a smaller multi-way cut metric.

The graph we consider is the collaboration network for the arXiv Astrophysics category \cite{snapnets}. This graph consists of 18,772 nodes and 198,110 undirected edges corresponding to authors and co-authorship, respectively. The graph is partitioned into 290 connected components, the largest of which contains 17,903 nodes. For all of the experiments conducted here, we used $k$ eigenvectors corresponding to the $k$ largest eigenvalues of a degree-normalized adjacency matrix $A_N$ as in subsection \ref{sec:equisized}.

We begin with a simple experiment conducted using the whole graph. Because the graph has 290 connected components, it is possible to partition the nodes into $k$ disconnected subgraphs using any subspace of the invariant space associated with the first 290 eigenvalues. As outlined in Theorem~\ref{thm:connected_comp} our algorithm theoretically accomplishes this task without fail. Our experiments validate this fact, looking for a $10$-way partition our deterministic algorithm achieved a multi-way cut metric of zero. In contrast, over 50 trials the smallest multi-way cut metric found using \texttt{k-means++} was $0.08.$ Seeding \texttt{k-means} using our clustering resulted in multi-way cut metric of $11.03,$ though the \texttt{k-means} objective was decreased from $6.31$ to $4.69.$ In particular this demonstrates that the \texttt{k-means} objective function is not necessarily a good proxy for the multi-way cut metric.

Next, we take the largest connected component of the graph and seek to partition it six ways. Admittedly, as with many real world graphs it is not an easy problem to determine a value for $k$ that leads to a natural clustering. Nevertheless, here we choose $k=6$ and note that there is a slightly larger gap in the spectrum between the sixth and seventh eigenvalues than others in the surrounding area. 

Table~\ref{tab:graph} summarizes the results comparing our deterministic algorithm with 50 trials of \texttt{k-means++}. While our algorithm does not find the best possible multi-way cut, it gets close in a deterministic manner and serves as a very good seeding for \texttt{k-means}. Interestingly, this is accomplished with complete disregard for the \texttt{k-means} objective function: our algorithm results in a sum of squared distances larger than any of the local minima found by \texttt{k-means++}. However, by seeding \texttt{k-means} with the clustering performed by our algorithm we find as good a local minima of the \texttt{k-means} objective as any found by \texttt{k-means++}. 
\begin{table}
{\renewcommand{\arraystretch}{1.2}

\colorlet{tableheadcolor}{gray!50} 
\newcommand{\headcol}{\rowcolor{tableheadcolor}} 
\colorlet{tablerowcolor}{gray!10} 
\newcommand{\rowcol}{\rowcolor{tablerowcolor}} %
\newcommand{\topline}{\arrayrulecolor{black}\specialrule{0.1em}{\abovetopsep}{0.5pt}%
            \arrayrulecolor{tableheadcolor}\specialrule{\belowrulesep}{0pt}{-3pt}%
            \arrayrulecolor{black}
            }
\newcommand{\midline}{\arrayrulecolor{tableheadcolor}\specialrule{\aboverulesep}{-1pt}{0pt}%
            \arrayrulecolor{black}\specialrule{\lightrulewidth}{0pt}{0pt}%
            \arrayrulecolor{white}\specialrule{\belowrulesep}{0pt}{-3pt}%
            \arrayrulecolor{black}
            }
\newcommand{\rowmidlinecw}{\arrayrulecolor{tablerowcolor}\specialrule{\aboverulesep}{0pt}{0pt}%
            \arrayrulecolor{black}\specialrule{\lightrulewidth}{0pt}{0pt}%
            \arrayrulecolor{white}\specialrule{\belowrulesep}{0pt}{0pt}%
         \arrayrulecolor{black}}
\newcommand{\rowmidlinewc}{\arrayrulecolor{white}\specialrule{\aboverulesep}{0pt}{0pt}%
            \arrayrulecolor{black}\specialrule{\lightrulewidth}{0pt}{0pt}%
            \arrayrulecolor{tablerowcolor}\specialrule{\belowrulesep}{0pt}{0pt}%
            \arrayrulecolor{black}}
\newcommand{\rowmidlinew}{\arrayrulecolor{white}\specialrule{\aboverulesep}{0pt}{0pt}%
            \arrayrulecolor{black}}
\newcommand{\rowmidlinec}{\arrayrulecolor{tablerowcolor}\specialrule{\aboverulesep}{0pt}{0pt}%
            \arrayrulecolor{black}}
\newcommand{\bottomline}{\arrayrulecolor{white}\specialrule{\aboverulesep}{0pt}{-2pt}%
            \arrayrulecolor{black}\specialrule{\heavyrulewidth}{0pt}{\belowbottomsep}}%
\newcommand{\bottomlinec}{\arrayrulecolor{tablerowcolor}\specialrule{\aboverulesep}{0pt}{0pt}%
            \arrayrulecolor{black}\specialrule{\heavyrulewidth}{0pt}{\belowbottomsep}}%

\caption{Comparison of deterministic CPQR-based clustering and \texttt{k-means++}.}
\label{tab:graph}
\rowcolors{2}{gray!25}{white}
\centering
\begin{tabular}{lcc} \topline\rowcolor{gray!50}
 { Algorithm} & { \texttt{k-means} objective} & { multi-way cut~\eqref{eq:cut_metric} } \\ \midline
 \texttt{k-means++} mean & 1.36 & 8.48 \\ 
 \texttt{k-means++} median & 1.46 & 10.21  \\ 
 \texttt{k-means++} minimum & 0.76 & 1.86 \\ 
 \texttt{k-means++} maximum & 2.52 & 42.03 \\ 
 CPQR-based algorithm & 2.52 & 1.92 \\ 
 \texttt{k-means} seeded with our algorithm & 0.76  & 1.86 \\ \bottomline
\end{tabular}
}
\end{table}

\section{Discussion and conclusion}
We have presented a new efficient (particularly the randomized variant) algorithm for spectral clustering of graphs with community structure. In contrast to the traditionally used \texttt{k-means} algorithm, our method requires no initial guess for cluster centers and achieves the theoretically expected recovery results for the SBM. Given that a bad initial guess can mean the \texttt{k-means} algorithm does not achieve the desired result this is a particularly important feature. Furthermore, we can always use our algorithm to generate an initial seed for \texttt{k-means} and observe in our experiments that this can provide small gains in recovery near the phase transition boundary. When considering a real world graph, our algorithm compares favorably as a means for determining clusters that achieve a small multi-way cut metric.

Recent results have yielded important understanding of what can be possible for recovery in the SBM. However, the SDP-based methods that achieve these results do not scale to large problems of practical interest. Conversely, the traditionally used \texttt{k-means} algorithm, while scalable to large problems, fails to achieve the best possible behavior on these small scale SBMs due to its dependence on an initial guess. Our algorithm is both scalable to large problems and matches the behavior of SDPs on smaller model problems. These two properties make it attractive for general use.

Here we have explored the behavior of our algorithm on the SBM and provided theoretical justification for its use. To motivate its use for more general problems, we have discussed its connections to the more broadly applicable OCS of clusters arising naturally in various problems. Furthermore, Theorems~\ref{thm:main} and~\ref{thm:main_rand} may be extensible to cluster indicators with a more general structure, though at the expense of weaker results. We intend to further explore the behavior of our algorithms on a wider range of real-world graphs in future work.

\section*{Funding}
This work was supported by the National Science Foundation [DMS-1606277 to A.D.]; Stanford [Stanford Graduate Fellowship to V.M.]; and the United States Department of Energy [DE-FG02-97ER25308 to V.M., DE-FC02-13ER26134 to L.Y., and DE-SC0009409 to L.Y.].

\section*{Acknowledgment}
The authors thank Austin Benson, Sven Schmit, Nolan Skochdopole, and Yuekai Sun for useful discussion, as well as Stanford University and the Stanford Research Computing Center for providing computational resources and support that have contributed to these research results.

\bibliographystyle{plain}
\bibliography{clustering}

\end{document}